\newtheorem{theo}{{\bfseries Theorem}}[section]
\newtheorem{prop}[theo]{{\bfseries Proposition}}
\newtheorem{lem}[theo]{{\bfseries Lemma}}
\newtheorem{cor}[theo]{{\bfseries Corollary}}
\newtheorem{df}[theo]{{\bfseries Definition}}
\newtheorem{ex}[theo]{{\bfseries Example}}
\def \ol {\overline}
\def \N {\mathbb N}
\def \Z {\mathbb Z}
\def \R {\mathbb R}
\def \A {\mathcal A}
\def \M {\mathcal M}
\def \O {\mathcal O}
\def \S {\mathcal S}
\def \V {\mathcal V}
\def \ep {\epsilon}
\def \d {\delta}
\def \s {\sigma}
\def \o {\omega}
\numberwithin{equation}{section}
\begin{document}

\title{\bfseries  Variations on the Concept of Topological Transitivity}
\vspace{1cm}
\author{Ethan Akin, Joseph Auslander, Anima Nagar\\ \\
Mathematics Department,\\
 The City College, 137 Street and Convent Avenue,\\
 New York City, NY 10031, USA\\
ethanakin@earthlink.net\\ \\
Mathematics Department,\\
University of Maryland,\\
College Park, MD 20742, USA\\
jna@math.umd.edu \\ \\
Department of Mathematics,\\
Indian Institute of Technology Delhi,\\
Hauz Khas, New Delhi 110016, INDIA\\
anima@maths.iitd.ac.in}

    \vspace{.2cm}
\date{January, 2016}

\vspace{.2cm} \maketitle

\begin{abstract}
We describe  various strengthenings of the concept of topological transitivity. Especially when one
departs from the family of invertible systems, a number of interesting properties arise. We present the
architecture of implications among ten reasonable notions of transitivity.

\end{abstract}

\thanks{\emph{keywords:} topological transitivity, strong transitivity, exact  systems,locally eventually onto, noninvertible
dynamical systems}

\thanks{{\em 2010 Mathematical Subject Classification } 37B05, 37B20, 54H20}

\section{Introduction}

For us a dynamical system $(X,f)$ is a pair with $X$  a compact metric space,   $f: X \to X$
a
continuous map and the dynamics given  by iteration.
While we don't explicitly assume it, our main interest is in systems $(X,f)$ with $f$ not invertible.

For subsets $A, B \subset X$ we define the \emph{ hitting time set}
$$N(A,B) \quad = \quad \{ n \in \N : f^n(A) \cap B \not= \emptyset \} \ = \ \{ n \in \N : A \cap f^{-n}(B) \not= \emptyset \},$$
where $\N$ is the set of positive integers.  We identify a singleton with its unique element, writing $N(x,B)$, for example,
for $N(\{ x \},B)$.  We call the system \emph{topologically transitive} when for every opene (= open and nonempty) set $U$
$\bigcup_{n=1}^{\infty} f^n(U)$ is dense in $X$, or, equivalently if for every opene pair
$U, V \subset X$, the set of hitting times $N(U,V)$ is nonempty. A point $x \in X$  is called a \emph{transitive point} when
for every opene $V \subset X$, the hitting time set $N(x,V)$ is non-empty. This is equivalent to saying that the
orbit $\O(x) = \{f^n (x) : n \in \mathbb{N} \}$ is dense in $X$. We denote by $Trans(f)$ the set of transitive points.
Transitivity is equivalent to a
kind of indecomposability of the phase space
with respect to the acting group.

There are a number of slightly different versions of topological
transitivity, which are surveyed in \cite{ac}. The one we have chosen implies, by compactness,
that $f$ is surjective.  Furthermore, either $X$ is finite, and so consists of a single periodic
orbit, or it is perfect, i.e. it has no isolated points.
Our definition serves to exclude compactifications of the translation map on $\N$ or on $\Z$ the set of integers.
On a perfect space all of the definitions agree.

Here we  study some stronger forms of transitivity
\begin{itemize}
\item[1.]  Topological Transitivity  (TT).
\item[2.]  Strong Transitivity (ST).
\item[3.]  Very Strong Transitivity (VST).
\item[4.]  Minimality (M).
\item[5.]  Weak Mixing (WM).
\item[6.]  Exact Transitivity (ET).
\item[7.]  Strong Exact Transitivity (SET).
\item[8.]  Strong Product Transitivity (SPT).
\item[9.]  Mixing, or Topological Mixing (TM).
\item[10.] Locally Eventually Onto (LEO).
\end{itemize}

These concepts are not new to literature. Parry \cite{parry} had defined ``Strongly transitive systems"
which we here call ``very strongly transitive systems". The concept of ``strongly transitive" that we
define here has been studied as property ``(B)" by Nagar and Kannan \cite{avksm} and
by Akin, Auslander and Nagar \cite{aan}. The concept
of ``Locally eventually onto" was introduced by Denker and Urbanskii  in \cite{du} where they called  such systems
``exact".  We follow Kaminski et al. \cite{kss} who use the label ``exactness" for a somewhat different property, and then
use ``locally eventually onto" for the earlier notion.

It is known that the properties of strongly transitive, very strongly transitive and locally eventually onto are observed in
the dynamics of piece-wise monotonic maps, subshifts of finite type, $\beta$-shifts, and Julia sets.

If we consider the induced dynamics on $2^X$, the space of non-empty closed subsets of
$X$ given the Hausdorff topology, then it has been shown in Akin, Auslander and
Nagar \cite{aan} that for the induced system the concepts ``strongly transitive",
 and  ``local eventually onto" are  equivalent (and so these are equivalent to very strongly transitive, strongly product
 transitive and strong exact transitive as well).  Furthermore, these occur exactly when the original system
$(X,f)$ is locally eventually onto.

 Our purpose here is to define these concepts and list various equivalent conditions and properties.
 In the process we will show that the following
 implications hold
\begin{equation}\label{01}
\begin{split}
LEO \Longrightarrow ET, \ TM \ \Longrightarrow \  WM \ \Longrightarrow \ TT; \hspace{2.5cm}\\
LEO \  \Longrightarrow \ SPT \ \Longrightarrow \ SET \Longrightarrow \ ET, ST  \ \Longrightarrow \ TT; \hspace{1cm} \\
LEO, M \  \Longrightarrow \ VST \Longrightarrow \ ST \Longrightarrow \ TT. \hspace{3cm}
\end{split}
\end{equation}
We will show also that the properties are distinct by showing that various reverse implications fail.

The reader may note the absence of any discussion of sensitive dependence on initial conditions, i.e. \emph{sensitivity}.
Any topologically transitive system $(X,f)$ which admits an equicontinuity point is  almost equicontinuous and uniformly rigid
and so $f$ is a homeomorphism, see \cite{aab} and \cite{gw}. Hence, the topologically
transitive, noninvertible systems which are our primary concern are
all sensitive.

\vskip .5cm

\textbf{Acknowledgement:} A part of this work was done when A. Nagar visited
University of Maryland and she gratefully acknowledges the hospitality of the Mathematics Department at College Park.

\vspace{1cm}

\section{Transitivity Properties}

For a dynamical system $(X,f)$ and a point $x \in X$ the \emph{orbit} of $x$ is $\O(x) \ = \ \{f^n (x) : n \in \N \}$, regarded
either as a sequence in, or as a subset of, the state space $X$. Notice that $x = f^0(x)$ need not be an element of
$\O(x)$. We let $\o(x)$ be the set of limit points of the orbit
so that
$$\o (x) \ = \ \bigcap_{N \in \N} \ \overline{ \{ f^n(x) : n \geq N \}}, \quad \mbox{and}
\quad \O(x) \cup \omega (x) = \overline{\O(x)},$$
the orbit closure of $x$.

A point $x$ is called \emph{recurrent} when $x \in \o (x)$.

We denote by
$$\O^-(x) = \bigcup \limits_{n \in \N} \{ f^{-n}(x)   \} \ = \ \{ y \in X : f^n(y) = x \ \mbox{for some} \ n \in \N \}.$$
and call it the \emph{negative orbit} of $x$.  We also  define the partial orbit sets.  For $N \in \N$
$$\O_N(x) \ = \ \{f^n (x) : 1 \leq n \leq N \}, \ \mbox{and} \ \O_N^-(x) = \bigcup \limits_{ 1 \leq n \leq N} \{ f^{-n}(x)  \}.$$

A subset $A \subset X$ is called \emph{$ +$ invariant} if $f(A) \subset A$ (or, equivalently, if $A \subset f^{-1}(A)$),
\emph{$-$ invariant} if $f^{-1}(A) \subset A$
and \emph{invariant} if $f(A) = A$. For example, $\O(x)$ is $+ $ invariant, $\O^-(x)$ is $- $ invariant.
Clearly, $A$ is $+ $ invariant iff $x \in A $ implies $\O(x)  \subset A$ and
$A$ is $- $ invariant iff $x \in A$ implies $\O^{-}(x) \subset A$.

For every $x \in X$ the set of limit points $\o (x) $ is invariant. Notice that $y \in \o (x)$ if and only if it
is the limit of a subsequence of the orbit sequence $\{ f^n(x) : n \in \N \}$. It easily follows that
\begin{equation}\label{omega}
f(\o (x)) \ = \ \o (x) \ = \ \o (f(x)).
\end{equation}

Clearly, a set is $+ $ invariant if and only if its complement is $- $ invariant.

We call a set $A \subset X$ to be \emph{weakly $- $ invariant} if $A \subset f(A)$ or, equivalently,
if for all $x \in A$ there exists $x_1 \in A$
such that $f(x_1) = x$. Thus, $A$ is weakly $- $ invariant if and only if $f(A \cap f^{-1}(A)) = A$.

\begin{lem}\label{lem1.01a} Let $A \subset X$.

(a) If $A$ is $- $ invariant and $f$ is surjective, then $A$ is weakly $- $ invariant.

(b) $A$ is invariant, i. e. $A = f(A)$, if and only if $A$ is both $+ $ invariant and weakly $- $ invariant.
$A = f^{-1}(A)$ if and only
if $A$ is both $+$ invariant and $-$ invariant and, in that case, it is invariant.

(c) If $A$ is either $+ $ invariant, weakly $- $ invariant or invariant then the
closure $\ol{A}$ satisfies the corresponding property.

(d) Assume $A$ is closed and nonempty. If $A$ is either $+ $ invariant or weakly $- $ invariant then $A$ contains a
closed, nonempty invariant set.  If $A$ is $-$ invariant then it contains a closed, nonempty subset which is
$- $ invariant and invariant.

(e) Assume $f$ is an open map. If $A$ is $+ $ invariant then the interior $A^{\circ}$ is $+ $ invariant.
If $A$ is $- $ invariant then $\ol{A}$ is $- $ invariant.
\end{lem}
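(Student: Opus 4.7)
Parts (a)--(c) are mostly unwinding definitions combined with routine topology. For (a), if $x \in A$, surjectivity of $f$ produces $y \in X$ with $f(y) = x$; then $y \in f^{-1}(A) \subset A$, so $x = f(y) \in f(A)$. For (b), the two biconditionals are immediate: $A \subset f^{-1}(A)$ rewrites as $f(A) \subset A$, so $A = f^{-1}(A)$ iff $A$ is both $+$ and $-$invariant, and similarly $A = f(A)$ iff $A$ is both $+$ and weakly $-$invariant; for the ``in that case'' clause, I would combine $-$invariance with (a) to get weak $-$invariance, then apply the first biconditional. For (c), continuity gives $f(\overline{A}) \subset \overline{f(A)}$ and compactness makes $f(\overline{A})$ closed; thus $f(\overline{A}) \subset \overline{A}$ when $A$ is $+$invariant, and $\overline{A} \subset \overline{f(A)} = f(\overline{A})$ when $A$ is weakly $-$invariant, with the invariant case combining both.

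Part (d) is the heart of the lemma. For closed nonempty $+$invariant $A$, let $B = \bigcap_n f^n(A)$; the sets $f^n(A)$ are nonempty, closed, and nested, so $B$ is closed and nonempty by the finite intersection property. The inclusion $f(B) \subset B$ is immediate; for $B \subset f(B)$, given $x \in B$ I would pick $y_n \in f^n(A)$ with $f(y_n) = x$ and extract a convergent subsequence, whose limit lies in $B$ (each $f^m(A)$ is closed and contains the tail) and maps to $x$ by continuity. For closed nonempty weakly $-$invariant $A$, I would invoke Zorn's lemma on the poset of closed nonempty weakly $-$invariant subsets of $A$ under inclusion: a decreasing chain has its intersection as a lower bound, weak $-$invariance of the limit being verified by an FIP argument applied to the closed sets $f^{-1}(x) \cap C_\alpha$. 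To upgrade a minimal element $B$ to invariance, consider $B' = B \cap f^{-1}(B)$: it is closed, nonempty (as $B$ is weakly $-$invariant), and a short check shows it is itself weakly $-$invariant, so minimality forces $B' = B$, giving $f(B) \subset B$ and hence invariance. For the $-$invariant case, apply (a) (needing $f$ surjective, as appears implicit in the paper) to reduce to the weakly $-$invariant case; to secure $-$invariance as well as invariance, it is cleaner to work with $A_n = A \cap f^{-1}(A) \cap \cdots \cap f^{-n}(A)$ and set $B = \bigcap_n A_n$, which is visibly $-$invariant (inherited from each factor), nonempty and weakly $-$invariant by FIP arguments, and $+$invariant by construction, hence invariant by (b).

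Part (e) is short and uses openness directly. For $+$invariant $A$, the image $f(A^\circ)$ is open (since $f$ is open) and contained in $f(A) \subset A$, so $f(A^\circ) \subset A^\circ$. For $-$invariant $A$, suppose $x \in f^{-1}(\overline{A})$ and let $U$ be any open neighborhood of $x$; then $f(U)$ is open and contains $f(x) \in \overline{A}$, so $f(U)$ meets $A$ at some $f(u)$ with $u \in U$, and then $u \in f^{-1}(A) \cap U \subset A \cap U$, proving $x \in \overline{A}$.

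The main obstacle is part (d): ensuring the Zorn-minimal element in the weakly $-$invariant case is genuinely invariant (not merely weakly $-$invariant) and that the intersection construction for the $-$invariant case simultaneously delivers $-$invariance and invariance. The remaining parts are careful bookkeeping of the definitions together with standard continuity/compactness/openness facts.
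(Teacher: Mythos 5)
Your proposal is correct, and for parts (a), (b), (c), (e) and the $+$invariant and $-$invariant cases of (d) it runs essentially parallel to the paper's proof (the paper phrases the $B \subset f(B)$ step in the $+$invariant case as a finite-intersection-property argument on $\bigcap_n f^{-1}(x)\cap f^n(A)$ rather than extracting a convergent subsequence, and it obtains the $-$invariance of $\overline{A}$ in (e) by taking complements of the $+$invariant-interior statement, but these are the same argument in different clothing; likewise your $A\cap f^{-1}(A)\cap\cdots\cap f^{-n}(A)$ collapses to the paper's $f^{-n}(A)$). The one genuine divergence is the weakly $-$invariant case of (d): the paper iterates the explicit construction $A_n = A_{n-1}\cap f^{-1}(A_{n-1})$, checks inductively that each $A_n$ is nonempty, closed and satisfies $f(A_n)=A_{n-1}$, and then runs the same compactness argument as in the $+$invariant case on $B=\bigcap_n A_n$; you instead invoke Zorn's lemma to produce a minimal closed nonempty weakly $-$invariant subset $B$ and upgrade it to invariance by showing $B\cap f^{-1}(B)$ is again in the poset, forcing $f(B)\subset B$. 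Both are valid, but the paper's constructive route has a side benefit it exploits in the Remark immediately following the lemma: its $B$ is the \emph{maximum} invariant subset of $A$, whereas your Zorn argument produces a minimal one and gives no canonical set. Finally, you are right that the "$-$invariant" clauses of (b) and (d) need $f$ surjective (otherwise $f^{-1}(A)$ can be empty and the conclusion fails); the paper makes the same implicit assumption, saying only that (b) is "obvious from the definitions since $f$ is surjective," so flagging it is appropriate rather than a defect of your argument.
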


{\bfseries Proof:} (a) If $x \in A \subset f(X)$ then there exists $x_1 \in X$ such that $f(x_1) = x$. Because $A$ is
$- $ invariant, $x_1 \in A$.

(b) These are obvious from the definitions since $f$ is surjective.

(c) By compactness, $f(\ol{A}) = \ol{f(A)}$. The results follow from the monotonicity of the closure operator.

(d) If $A$ is $+ $ invariant, let $A_0 = A$ and $A_n = f^n(A)$ for $n \in \N$. This is a decreasing sequence of nonempty closed
sets with $f(A_n) = A_{n+1}$. Let $B \ = \ \bigcap_n \ A_n$, a nonempty closed set by compactness. For all $n$
$f(B) \subset f(A_n) = A_{n+1}$. So $B$ is $+ $ invariant. If $x \in B$ then $x \in A_{n+1}$ implies $f^{-1}(x) \cap A_n \not=
\emptyset$. Hence, $\bigcap_n \ f^{-1}(x) \cap A_n  \ = \ f^{-1}(x) \cap B$ is nonempty by compactness again. Hence, $B$ is
$- $ invariant and so is invariant by (b).

If $A$ is weakly $- $ invariant, let $A_0 = A$ and, inductively, let $A_{n} = A_{n-1} \cap f^{-1}(A_{n-1})$. This is a
decreasing sequence of nonempty closed sets with $f(A_n) = A_{n - 1}$. Let $B = \bigcap_n \ A_n$ and proceed as before.

If $A$ is $- $ invariant, let $A_0 = A$ and, inductively, $A_{n} = f^{-1}(A_{n-1})$. Again, this is a
decreasing sequence of nonempty closed sets with $f(A_n) = A_{n - 1}$. Let $B = \bigcap_n \ A_n$ as before and observe that
 $f^{-1}(B) = \bigcap_n \ f^{-1}(A_n) = \bigcap_n A_{n+1} = B$.

(e) If $f$ is open then $f(A^{\circ})$ is an open subset of $f(A)$.  If also $A$ is $+ $ invariant then
$$f(A^{\circ}) \ \subset \ f(A)^{\circ} \ \subset \ A^{\circ}.$$
The $- $ invariance result follows by taking complements.

$\Box$ \vspace{.5cm}

{\bfseries Remark:} (a) The constructions in part (d) yield the maximum invariant subset in each case.

(b)The closure of a $- $ invariant set need not, in general, be $- $ invariant.
Let $X = \{ 0 \} \cup \{ \frac{1}{n} : n \in \N \}$.
Define $f$ on $X$ by $0, 1 \mapsto 0$ and $\frac{1}{n+1} \mapsto  \frac{1}{n}$ for $n \in \N$. The set
$A = \{ \frac{1}{n+1} : n \in \N \}$ is $- $ invariant but its closure is not.
\vspace{.5cm}

A subset $A \subset X$ is called \emph{$\ep$ dense} if it meets every open $\ep$ ball in $X$.

\begin{lem}\label{lem1.01} Let  $\{ A_n \}$ be a sequence of subsets of $X$.

(a) $\bigcup_{n = 1}^{\infty} \ A_n$ is dense if and only if for every $\ep > 0$ there exists $N \in \N$ such that
$\bigcup_{ n = 1}^{N} \ A_n$ is $\ep$ dense.

(b) If each $A_n$ is open and $\bigcup_{n = 1}^{\infty} \ A_n = X$  then there exists $N \in \N$ such that
$\bigcup_{ n = 1}^{N} \ A_n = X$.
\end{lem}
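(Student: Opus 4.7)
The plan is to use compactness of $X$ for both parts, together with an elementary $\epsilon$-net argument for part (a).

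For part (a), I would prove the two directions separately. The ``if'' direction is nearly immediate: if for every $\epsilon > 0$ some finite union $\bigcup_{n=1}^{N} A_n$ is $\epsilon$-dense, then a fortiori the full union $\bigcup_{n=1}^{\infty} A_n$ meets every $\epsilon$-ball for every $\epsilon > 0$, which is exactly what density means in a metric space. For the ``only if'' direction, assume $\bigcup_{n=1}^{\infty} A_n$ is dense and fix $\epsilon > 0$. By compactness of $X$, finitely many open $\epsilon$-balls $B(x_1,\epsilon),\dots,B(x_k,\epsilon)$ cover $X$. Density of the union forces each ball $B(x_i,\epsilon)$ to contain a point of some $A_{n_i}$. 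Set $N = \max\{n_1,\dots,n_k\}$. Then $\bigcup_{n=1}^{N} A_n$ meets every $B(x_i,\epsilon)$, and since these balls form a cover, it meets every open $\epsilon$-ball in $X$ (any such ball contains some $x_i$ and hence the witness point in $A_{n_i}$ lies within distance $\epsilon$). So $\bigcup_{n=1}^N A_n$ is $\epsilon$-dense, as required.

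For part (b), the hypothesis that each $A_n$ is open and that $\bigcup_{n=1}^{\infty} A_n = X$ says exactly that $\{A_n : n \in \mathbb{N}\}$ is an open cover of the compact space $X$. Extracting a finite subcover $A_{n_1},\dots,A_{n_k}$ and setting $N = \max\{n_1,\dots,n_k\}$ gives $\bigcup_{n=1}^{N} A_n \supset \bigcup_{j=1}^{k} A_{n_j} = X$, and the reverse inclusion is trivial.

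There is no real obstacle here; the only point worth being careful about is making sure in (a) that the witness $\epsilon$-density of the finite union is verified against \emph{every} open $\epsilon$-ball, not just the balls in the chosen cover. This is handled by the standard observation that if $B(y,\epsilon)$ is any open $\epsilon$-ball and $x_i$ is a center of the cover with $d(x_i,y) < \epsilon$, then the witness in $A_{n_i} \cap B(x_i,\epsilon)$ need not lie in $B(y,\epsilon)$ in general; so one should instead take the cover to be by $\epsilon/2$-balls and conclude $\epsilon$-density (or equivalently work directly with the definition that ``$\epsilon$-dense'' means meeting every open $\epsilon$-ball and apply the argument with $\epsilon/2$). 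This minor refinement is the only subtlety, and the whole argument is otherwise a clean application of compactness.
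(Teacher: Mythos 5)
Your argument is correct and is essentially the paper's own: part (a) via a finite cover by $\ep/2$ balls (with the observation that a set meeting every ball of such a cover is $\ep$ dense), and part (b) via extraction of a finite subcover. The refinement you flag at the end — covering by $\ep/2$ balls rather than $\ep$ balls — is exactly the choice the paper makes from the start, so nothing further is needed.
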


{\bfseries Proof:} (a) Because $X$ is compact it has a
finite cover by $\ep/2$ balls. A set which meets each of these is $\ep$ dense.

(b) This follows from compactness.

$\Box$ \vspace{.5cm}

\begin{df}\label{df1.01a} A system $(X,f)$ is called \emph{exact} if for every pair of opene subsets $U,V \subset X$
there exists $n \in \N$ such  that $f^n(U) \cap f^n(V) \ \not= \emptyset$.

We call the system \emph{fully exact} if for every pair of opene subsets $U,V \subset X$
there exists $n \in \N$ such  that $(f^n(U) \cap f^n(V))^{\circ} \ \not= \emptyset$.
\end{df}

\vspace{.5cm}

The definition of exactness follows \cite{kss}.  Thus, $(X,f)$ is exact (or fully exact) if and only if
for  every pair of opene subsets $U,V \subset X$ we have  $\bigcup_n \ (f^n(U) \cap f^n(V))  \ \not= \emptyset$
(resp.  $\bigcup_n \ (f^n(U) \cap f^n(V))^{\circ}  \ \not= \emptyset$).

Clearly, if $f$ is an open map then exactness and fully exactness are equivalent.

\begin{theo}\label{theo1.01b} (a) If $(X,f)$ is exact and $f$ is injective then $X$ is a singleton, i. e.
the system is trivial.

(b) $(X,f)$ is fully exact if and only if for
every pair of opene subsets $U,V \subset X$ we have $(\bigcup_n \ f^n(U) \cap f^n(V))^{\circ}  \ \not= \emptyset$.
\end{theo}

\begin{proof} (a) : If $X$ is not a singleton then it contains a pair of disjoint opene sets $U,V$.
Since $f^n(U) \cap f^n(V) \not= \emptyset$ for some $n$, the map $f$ is not injective.

(b) : If the system is fully exact then
$$ \bigcup_n \ (f^n(U) \cap f^n(V))^{\circ} \ \subset \ (\bigcup_n \ f^n(U) \cap f^n(V))^{\circ}$$
implies that the latter set is nonempty.

Let $A \subset U, B\subset V$ be closed sets with nonempty interior. If
the open set $G = (\bigcup_n \ f^n(A) \cap f^n(B))^{\circ} $ is nonempty then it is a Baire space with
a countable, relatively closed cover $\{ G \cap f^n(A) \cap f^n(B) : n \in \N \}$.
So by the Baire Category Theorem some $f^n(A) \cap f^n(B)$ has
nonempty interior in $G$ and so in $X$. In fact, $\bigcup_n \ (f^n(A) \cap f^n(B))^{\circ}$ is dense in $G$.
It follows that $(X,f)$ is fully exact. \end{proof}

$\Box$ \vspace{.5cm}

We note that these two notions are different and in general far away from the property of transitivity.

\begin{ex} There exist systems which are exact but not fully
exact and which are fully exact and surjective but not topologically transitive \end{ex} Consider the system $([-1,1],f)$ with
$$ f(x) = \left\{
            \begin{array}{ll}
              -(2+2x), & \hbox{$-1 \leq x \leq -1/2$;} \\
              2x, & \hbox{$-1/2 \leq x \leq 1/2$;} \\
              2 - 2x, & \hbox{$1/2 \leq x \leq 1$.}
            \end{array}
          \right.$$

Then any interval contained in $[-1,0]$ or $[0,1]$ eventually covers $[-1,0]$ or $[0,1]$ respectively with intersection $\{0\}$. This system is exact but not fully exact.

Whereas consider the system $([-1,1],g)$ with
$$ g(x) = \left\{
            \begin{array}{ll}
              -(2+2x), & \hbox{$-1 \leq x \leq -1/2$;} \\
              2x, & \hbox{$-1/2 \leq x \leq 1/2$;} \\
              3 - 4x  , & \hbox{$1/2 \leq x \leq 1$.}
            \end{array}
          \right.$$

Then any interval contained in $[-1,0]$ or $[0,1]$ eventually covers $[-1,0]$ or $[-1,1]$ respectively with intersection  $[-1,0]$. This system is fully exact.

The reader can refer to Theorem \ref{theo1.03} below to see that these systems are not transitive.

$\Box$ \vspace{.5cm}

\begin{df}\label{def1.02} $(X,f)$ is called:
\begin{itemize}
\item[1.] \emph{Topologically Transitive (TT)} if for every opene $U \subset X$, $\bigcup \limits_{n=1}^\infty \ f^n(U)$ is
dense in $X$.

\item[2.]  \emph{Strongly Transitive (ST)} if for every opene $U \subset X$, $\bigcup \limits_{n=1}^\infty \ f^n(U) = X$.

\item[3.] \emph{Very Strongly Transitive (VST)} if for every opene $U \subset X$ there is a
 $N \in \N$ such that $\bigcup \limits_{n=1}^N \ f^n(U) = X$.

\item[4.] \emph{Minimal (M)} if there is no proper, nonempty, closed invariant subset of $X$.

\item[5.] \emph{Weak Mixing (WM)} if the product system $(X \times X, f \times f)$ is topologically transitive.

\item[6.] \emph{Exact Transitive (ET)}  if for every pair of opene sets $U,V \subset X$, \\
$\bigcup \limits_{n=1}^\infty \ (f^n(U) \cap f^n(V))$ is dense in $X$.

\item[7.] \emph{Strongly Exact Transitive (SET)}  if for every pair of opene sets $U,V \subset X$,
$\bigcup \limits_{n=1}^\infty \ (f^n(U) \cap f^n(V)) = X$.

\item[8.] \emph{Strongly Product Transitive (SPT)} if for every positive integer $k$ the product system $(X^k, f^{(k)})$
is strongly transitive.

\item[9.] \emph{Mixing or Topologically Mixing (TM) } if for every pair of opene  sets $U, V \subset X$,
there exists an $N \in \N$ such that $f^n(U) \cap V \neq \emptyset$ for all $n \geq N$.

\item[10.] \emph{Locally Eventually Onto (LEO)} if for every opene $U \subset X$,
there exists $N \in \N$ such that $f^N(U) = X$, and  so $f^n(U) = X$ for all $n \geq N$.
\end{itemize}
 \end{df}
\vspace{.5cm}

In the case of the property SPT , we rejected the obvious label  \emph{Strongly Weak Mixing}.

We now provide equivalent descriptions of these properties.

For topological transitivity there are many equivalences in the literature, see e. g.  \cite{ ac, avksm, k, sil}.

\begin{theo} \label{theo1.03} For a dynamical system  $(X,f)$ the
following are equivalent.
\begin{itemize}
\item[(1)]  The system is topologically transitive.

\item[(2)] For every pair of  opene sets $U$ and $V$ in $X$, there
exists $n \in \N$ such  that $f^{-n}(U) \cap V \neq
\emptyset$.

\item[(3)]  For every pair of opene
sets $U, V \subset X$ the set $N(U,V)$ is nonempty.

\item[(4)]  For every pair of opene
sets $U, V \subset X$ the set $N(U,V)$ is infinite.

\item[(5)] There exists $x \in X$ such that the orbit $\O(x)$ is dense in $X$,
i. e. the set $Trans(f)$ of transitive points is nonempty.

\item[(6)] The set $Trans(f)$ of transitive points equals $ \{ x : \omega (x) = X \}$ and it
is a dense, $G_{\delta}$ subset of $X$.

\item[(7)] For every  opene set $U \subset X$,
$\bigcup \limits_{n=1}^{\infty} f^n(U)$ is dense in $X$.

\item[(8)] For every  opene set $U \subset X$, and $\ep > 0$, there exists $N \in \N$ such that
$\bigcup \limits_{n=1}^{N} f^n(U)$ is $\ep$ dense in $X$.

\item[(9)] For every  opene set $U \subset X$,
$\bigcup \limits_{n=1}^{\infty} f^{-n}(U)$ is dense in $X$.

\item[(10)] For every  opene set $U \subset X$, and $\ep > 0$, there exists $N \in \N$ such that
$\bigcup \limits_{n=1}^{N} f^{-n}(U)$ is $\ep$ dense in $X$.

\item[(11)] If $U \subset X$ is opene and $-$ invariant, then  $U$ is dense in $X$.

\item[(12)] If $E \subset X$ is closed and $+ $ invariant, then $E = X$
or $E$ is nowhere dense in $X$.

\end{itemize}

If $(X,f)$ is topologically transitive, then  $f$ is surjective and either $X$ is a single periodic orbit or
it is a perfect space, i.e. has no isolated points.
\end{theo}

{\bfseries Proof:} Notice that condition (7) is the definition of TT and so, of course, (1) $\Leftrightarrow$ (7).
Condition (7) clearly implies that $f(X)$ is dense and so equals $X$ by compactness, i.e. $f$ is surjective.

We first show that if $Trans(f) \not= \emptyset$ then  either $X$ is a periodic
orbit or it is perfect.  Furthermore, $x \in Trans(f)$ implies $\o (x) = X$.

Assume $\O(x) $ is dense. If $x \in \O(x)$ then $x$ is a periodic point
with finite orbit $\O(x)$ dense in $X$ and equals $\o (x)$.
That is, $X$ is a periodic
orbit.  If $x \not\in \O(x)$ then the dense set $\O(x)$ is not closed and so is infinite. Thus, the points of the orbit
are all distinct. This implies that if $y \in X \setminus \O(x)$ then it is the limit of some sequence $f^{n_i}(x)$ with
$n_i \in \N$ and $n_i \to \infty$. In particular, there is such a sequence with $f^{n_i}(x) \to x$ and so
$f^{n_i + k}(x) \to f^k(x)$ for all $k \in \N$. Thus, no point of $X$ is isolated and every point is contained in
$\o (x)$.

Each of   (2), (7), (9) $\Leftrightarrow$ (3) is an easy exercise. (9) $\Leftrightarrow$ (11) since
$\bigcup \limits_{n=1}^{\infty} f^{-n}(U)$ is $-$ invariant and equals $U$ if $U$ is $-$ invariant.
(7) $\Leftrightarrow$ (8) and (9) $\Leftrightarrow$ (10) from Lemma \ref{lem1.01} (a). (11) $\Leftrightarrow$ (12) by taking
complements.

This leaves (4), (5) and (6).

(6) $\Rightarrow$ (5), and (4) $\Rightarrow$ (3)  are obvious.

(9) $\Rightarrow$ (6):

It is clear that
 $$ Trans(f) \ = \ \bigcap_U \ \bigcup \limits_{n=1}^{\infty} f^{-n}(U),$$
with $U$ varying over a countable base. By assumption (9) each $\bigcup \limits_{n=1}^{\infty} f^{-n}(U)$ is
a dense open set. The  Baire Category Theorem then implies that $Trans(f)$ is a dense $G_{\d}$ set. By our initial argument,
$Trans(f) = \{ x : \o (x) = X \}$.

(5) $\Rightarrow$ (4): (4) is obvious if $X$ is a periodic orbit. Otherwise, (5) and our initial argument
imply that $X$ is perfect. If $\O(x)$ is dense then it meets every opene set in an infinite set because $X$ is perfect.
It then follows  that $N(U,V)$ is infinite for every opene pair $U, V$.

$\Box$ \vspace{.5cm}

{\bfseries Remark:} Clearly, (12) implies that if $(X,f)$ is topologically transitive then
$X$ is not the union of two proper, closed, $+ $invariant subsets. If $X$ is perfect then the converse is true as well,
see \cite{ac}.
\vspace{.5cm}

\begin{cor}\label{cor1.03a} For a system $(X,f)$ the set $Trans(f)$ is invariant and $ - $ invariant.

Every transitive point is recurrent.\end{cor}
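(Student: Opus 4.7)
The plan is to reduce everything to the single identity $\omega(x) = \omega(f(x))$ from equation~(\ref{omega}) together with the characterization $Trans(f) = \{x : \omega(x) = X\}$ furnished by Theorem~\ref{theo1.03}(6).

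First I would dispose of the trivial case $Trans(f) = \emptyset$: the empty set is $+$invariant, $-$invariant, and invariant, so the conclusion holds vacuously. So assume there is a transitive point. By Theorem~\ref{theo1.03}, parts (5) and (6), the system is topologically transitive, $f$ is surjective, and
\[
Trans(f) \ = \ \{ x \in X : \omega (x) = X \}.
\]

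Now apply (\ref{omega}), which says $\omega(x) = \omega(f(x))$ for every $x \in X$. This yields the chain of equivalences
\[
x \in Trans(f) \ \Longleftrightarrow \ \omega(x) = X \ \Longleftrightarrow \ \omega(f(x)) = X \ \Longleftrightarrow \ f(x) \in Trans(f),
\]
so $Trans(f) = f^{-1}(Trans(f))$. In particular $Trans(f)$ is both $+$invariant and $-$invariant, and since $f$ is surjective Lemma~\ref{lem1.01a}(b) promotes this to $f(Trans(f)) = Trans(f)$, i.e.\ $Trans(f)$ is invariant.

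Finally, for recurrence: if $x \in Trans(f)$ then $\omega(x) = X$ by Theorem~\ref{theo1.03}(6), so in particular $x \in \omega(x)$, which is exactly the definition of recurrence. I expect no real obstacle here; the only mild point of care is remembering to invoke surjectivity from the TT hypothesis (guaranteed automatically once $Trans(f)$ is nonempty) before applying Lemma~\ref{lem1.01a}(b) to pass from $+$ and $-$ invariance to full invariance.
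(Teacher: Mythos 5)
Your proof is correct and follows essentially the same route as the paper: both rest on the identity $\omega(x)=\omega(f(x))$ from (\ref{omega}) together with the characterization $Trans(f)=\{x:\omega(x)=X\}$ from Theorem \ref{theo1.03}(6), with recurrence falling out of $x\in X=\omega(x)$. Your extra care about the empty case and the explicit appeal to Lemma \ref{lem1.01a}(b) to upgrade $Trans(f)=f^{-1}(Trans(f))$ to full invariance are just more detailed versions of steps the paper leaves implicit.
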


{\bfseries Proof:} Since $Trans(f) = \{ x : \o (x) = X \} $ by (6), and $\o (x) = \o (f(x))$ by \ref{omega},
it follows that the set of transitive points is invariant and $ - $ invariant. In particular,
$x \in \o (x)$ says that every transitive point is
recurrent.

$\Box$
\vspace{.5cm}

\begin{prop}\label{prop1.03b} Let $(X,f)$ be a topologically transitive system with $X$ infinite. For any $x \in X$ and $n \in \N$
$f^{-n}(x)$ is nowhere dense and so $O^-(x)$ is of first category. \end{prop}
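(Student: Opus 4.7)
The plan is to prove the pointwise statement---$f^{-n}(x)$ is nowhere dense for every $n \in \N$---and then the first category conclusion follows because $\O^-(x) = \bigcup_{n=1}^{\infty} f^{-n}(x)$ is a countable union. Since $f$ is continuous, each $f^{-n}(x)$ is closed, so being nowhere dense is equivalent to having empty interior. I will argue by contradiction: suppose there is an opene set $U \subset f^{-n}(x)$.

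Since $X$ is infinite and topologically transitive, Theorem \ref{theo1.03}(5)--(6) gives a transitive point $y \in X$ with $\omega(y) = X$; moreover, by the remark established inside that proof, $y$ is not periodic (else $X = \O(y)$ would be finite). Because $U$ is opene and $\omega(y) = X$, the orbit of $y$ enters $U$ infinitely often, so there exist $k_1 < k_2$ with $f^{k_1}(y), f^{k_2}(y) \in U \subset f^{-n}(x)$. Applying $f^n$ to both,
\[
x \;=\; f^{n+k_1}(y) \;=\; f^{n+k_2}(y) \;=\; f^{k_2-k_1}(x),
\]
so $x$ is a periodic point. In particular, $\O(x) \cup \{x\}$ is a finite set.

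The contradiction then comes from tracking the tail of $\O(y)$: for every $m \geq n+k_1$,
\[
f^m(y) \;=\; f^{m - (n+k_1)}\bigl(f^{n+k_1}(y)\bigr) \;=\; f^{m-(n+k_1)}(x) \;\in\; \O(x) \cup \{x\},
\]
so $\omega(y) \subset \overline{\O(x) \cup \{x\}} = \O(x) \cup \{x\}$, a finite set. This contradicts $\omega(y) = X$, which is infinite. Hence $f^{-n}(x)$ has empty interior, and is therefore nowhere dense. Consequently $\O^-(x)$ is a countable union of closed, nowhere dense sets, and so is of first category.

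The only real content is the extraction of periodicity of $x$ from two orbit hits on $U$; this is the main---and only slightly delicate---step. Everything else is bookkeeping: the existence of a transitive $y$ with $\omega(y) = X$, and the fact that once an orbit lands on a periodic point its tail collapses onto a finite set.
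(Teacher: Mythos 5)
Your proof is correct and follows essentially the same route as the paper: assume $f^{-n}(x)$ contains an opene set $U$, use a transitive point to force $x$ to be periodic, and conclude that the space collapses to a finite orbit. The only real difference is where the contradiction lands --- the paper takes the transitive point $y$ inside $U$, transfers transitivity to $x = f^n(y)$ via the invariance of $Trans(f)$ (Corollary~\ref{cor1.03a}), and gets periodicity from a single return of the orbit of $x$ to $U$, whereas you keep the contradiction at $y$ by using two visits of its orbit to $U$ and the resulting collapse of $\omega(y)$ onto a finite set; both versions are valid.
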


{\bfseries Proof:} Suppose $U \subset f^{-n}(x)$ is opene.  There exists $y \in U \cap Trans(f)$. Since
$Trans(f)$ is  invariant, $x \in Trans(f)$.  Hence, $f^k(x) \in U$ for some $k \in \N$. This implies that
$f^{k+n}(x) = x$ and so $x$ is a periodic point which is also a transitive point.  It follows that $X$ consists exactly
of the orbit of $x$ and so it is finite. Contrapositively, if $X$ is infinite then the closed set $f^{-n}(x)$ is
nowhere dense for all $n \in \N$. As the countable union of nowhere dense sets, $O^-(x)$ is of first category.

$\Box$ \vspace{.5cm}

Recall our identification of a singleton with the point it contains.  In particular,
$$ N(U,x) \quad = \quad \{ n \in \N : x \in f^n(U) \} \ = \ \{ n \in \N : f^{-n}(x) \cap U \not= \emptyset \}.$$

\begin{theo} \label{theo1.04} For a dynamical system $(X,f)$  the
following are equivalent.
\begin{itemize}
\item[(1)]  The system is strongly  transitive.

\item[(2)] For every  opene set $U \subset X$ and every point $x \in X$, there
exists $n \in \N$ such  that $x \in f^n(U)$.

\item[(3)]  For every  opene set $U \subset X$ and every point $x \in X$,
 the set $N(U,x)$ is nonempty.

\item[(4)]  For every  opene set $U \subset X$ and every point $x \in X$,
 the set $N(U,x)$ is infinite.

\item[(5)] For every $x \in X$,  the negative orbit $\O^-(x)$ is dense in $X$.

\item[(6)]  For every $x \in X$,  and $\ep > 0$, there exists $N \in \N$ such that
$\O^-_N(x)$ is $\ep$ dense in $X$.

\item[(7)] If $E \subset X$ is nonempty and $- $ invariant, then  $E$ is  dense in $X$.

\end{itemize}

If $(X,f)$ is strongly  transitive, then $f$ is topologically transitive.
\end{theo}

{\bfseries Proof:} That each of (1), (3) and (5) $\Leftrightarrow$ (2) are easy exercises.
(5) $\Leftrightarrow$ (6) by Lemma \ref{lem1.01} (a).

(4) $\Rightarrow$ (3) : Obvious.

(5) $\Rightarrow$ (4) : If $n \in N(U,x)$ then there exists $y \in U$ with $f^n(y) = x$. Because $\O^-(y)$ is dense
there exists $k \in N(U,y)$. That is, there exists $z \in U$ such that $f^k(z) = y$ and so $f^{k+n}(z) = x$. Hence,
$k + n \in N(U,x)$.  Thus, the set $N(U,x)$ is unbounded.

(5) $\Leftrightarrow$ (7):  $\O^-(x)$ is $- $ invariant and if $x \in E$ and $E$ is $- $ invariant then
$\O^-(x) \subset E$.

Condition (3) implies condition (3) of Theorem \ref{theo1.03}. Hence, strongly transitive implies topological transitive.

$\Box$ \vspace{.5cm}

Recall that a subset $L \subset \N$ is \emph{syndetic} if there exists $N \in \N$ such that every interval of length
$N$ in $\N$ meets $L$.

\begin{theo}\label{theo1.04a} For a dynamical system  $(X,f)$  the
following are equivalent.
\begin{itemize}
\item[(1)]  The system is very strongly transitive.

\item[(2)]  For all $\ep > 0$, there exists $N \in \N$ such that
$\O^-_N(x)$ is $\ep$ dense in $X$ for every $x \in X$.

\end{itemize}

If $(X,f)$ is very strongly transitive then
for every  opene set $U \subset X$ and every point $x \in X$,
 the set $N(U,x)$ is syndetic. \end{theo}

 {\bfseries Proof:} (1) $\Rightarrow$ (2):
 Cover $X$ by $\epsilon/2$ balls $V_1, \ldots , V_m$.
There exists an $N \in \N$  large enough that
 $\bigcup_{n=1}^N \ f^n(V_i) = X$, for $i = 1, \ldots, m$.
 Fix an
 $x \in X$, then for any $y \in X$,  $y \in V_i$ for
 some $i$ and $x \in f^n(V_i)$ for some $n$ with $1 \leq n \leq N$. Therefore $\O_N^- (x)$ is $\epsilon$ dense.

(2) $\Rightarrow$ (1): Suppose that
$N \in \N$ is such that $\O^-_N (x)$ is $\epsilon$ dense for
every $x \in X$. Let $W$ be an $\epsilon$ ball  in $X$.
For $x \in X$  there exists $ x' \in W$ with $f^n(x') = x$
where $1 \leq n \leq N$. Hence, $x \in \bigcup_{n=1}^N   f^n(W)$.
Since $x$ is arbitrary, $\bigcup_{n=1}^N  \  f^n(W) = X$.

 If $X = \bigcup_{n = 1}^{N} \ f^n(U)$ then for every
 $k \in \N, \ X = f^k(X) = \bigcup_{n = k+1}^{N+k} \ f^n(U)$.
Thus, for every $x \in X$, the set $N(U,x)$ meets every interval of length $N$ in $\N$.

$\Box$ \vspace{.5cm}

\begin{cor}\label{cor1.04b} If $(X,f)$ is very strongly transitive then for any opene $U, V \subset X$,
the set $N(U,V)$ is syndetic. \end{cor}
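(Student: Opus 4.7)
The plan is to bootstrap the syndeticity claim in the previous theorem from the case of a point to the case of an opene set, using only the monotonicity of the hitting-time notion and the trivial fact that any superset of a syndetic subset of $\N$ is syndetic.

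First, I fix opene subsets $U, V \subset X$ and choose any point $x \in V$; this is possible since $V$ is nonempty. Applying the final statement of Theorem \ref{theo1.04a} to $U$ and the point $x$, the set $N(U,x) = \{n \in \N : x \in f^n(U)\}$ is syndetic in $\N$.

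Next, I observe that $N(U,x) \subset N(U,V)$. Indeed, if $n \in N(U,x)$ then $x \in f^n(U)$, and since $x \in V$ we have $x \in f^n(U) \cap V$, so $f^n(U) \cap V \neq \emptyset$ and hence $n \in N(U,V)$. Any superset of a syndetic subset of $\N$ is syndetic (if every interval of length $N$ meets $N(U,x)$, then a fortiori every such interval meets $N(U,V)$), so $N(U,V)$ is syndetic.

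There is no real obstacle here: all the work has already been done inside the proof of Theorem \ref{theo1.04a}, which extracted a uniform bound $N$ from the compactness-plus-VST argument. The corollary is just the observation that the uniformity in $x$ carries over to uniformity over points of any opene target set.
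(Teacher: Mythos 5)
Your proof is correct and is exactly the paper's argument: the paper's entire proof is the one-line observation that if $x \in V$ then $N(U,x) \subset N(U,V)$, combined with the syndeticity of $N(U,x)$ from Theorem \ref{theo1.04a}. You have simply spelled out the same steps in more detail.
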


{\bfseries Proof:} If $x \in V$, then $N(U,x) \subset N(U,V)$.

$\Box$ \vspace{.5cm}

\begin{theo}\label{theo1.05} If $f$ is an open map then the following are equivalent.
\begin{itemize}
\item[(1)]  The system is very strongly transitive.

\item[(2)]  The system is strongly transitive.

\item[(3)]  $X$ does not contain a proper, closed $- $ invariant subset.
\end{itemize}

\end{theo}

{\bfseries Proof:} (1) $\Rightarrow$ (2) $\Rightarrow$ (3) whether the map is open or not.

(2) $\Rightarrow$ (1): If $U$ is opene then each $f^n(U)$ is open and so if $\{ f^n(U) : n \in \N \}$ covers $X$ then
it has a finite subcover.

(3) $\Rightarrow$ (2):  If $E$ is a nonempty $- $ invariant subset of $X$ then by Lemma \ref{lem1.01a} (e)
$\ol{E}$ is a nonempty, closed $- $ invariant subset and so it equals $X$.  Thus, $E$ is dense.  This verifies
condition (6) of Theorem \ref{theo1.04}.

$\Box$ \vspace{.5cm}

\begin{theo} \label{theo1.06} For a dynamical system $(X,f)$  the
following are equivalent.
\begin{itemize}
\item[(1)]  The system is minimal.

\item[(2)] For every  opene set $U \subset X$ and every point $x \in X$, there
exists $n \in \N$ such  that $f^n(x) \in U$.

\item[(3)]  For every  opene set $U \subset X$ and every point $x \in X$,
 the set $N(x,U)$ is nonempty.

 \item[(4)]  For every  opene set $U \subset X$ and every point $x \in X$,
 the set $N(x,U)$ is syndetic.

\item[(5)] For every $x \in X$,  the  orbit $\O(x)$ is dense in $X$.

\item[(6)] The set $Trans(f)$ is equal to the entire space $X$. 

\item[(7)] For every $x \in X, \ \o f(x) = X$.

\item[(8)]   For every  opene set $U \subset X$, $\bigcup_{n = 1}^{\infty} \ f^{-n}(U) \ = \ X$.

\item[(9)]   For every  opene set $U \subset X$, there exists $N \in \N$ such that
 $\bigcup_{n = 1}^{N} \ f^{-n}(U) \ = \ X$.

\item[(10)] If $E \subset X$ is nonempty, closed and invariant, then  $E = X$.

\item[(11)] If $E \subset X$ is nonempty, closed and $ + $ invariant, then  $E = X$.

\item[(12)] If $E \subset X$ is nonempty, closed and weakly $ - $ invariant, then  $E = X$.
\end{itemize}

If $(X,f)$ is minimal, then it is very strongly transitive.
\end{theo}

{\bfseries Proof:} Notice that condition (8) is the definition of minimality and so, of course, (1) $\Leftrightarrow$ (8).

Again each of the equivalences   (2), (3) and (8) $\Leftrightarrow$ (5) is an easy exercise.

(5)  $\Leftrightarrow$ (6) is obvious and (6) $\Leftrightarrow$ (7) by Theorem \ref{theo1.03} (6).

(8) $\Leftrightarrow$ (9) by compactness.

(4) $\Rightarrow$ (3) : Obvious.

(9) $\Rightarrow$ (4) :  If $X = \bigcup_{n = 1}^{N} \ f^{-n}(U)$ then for every
 $k \in \N, \ X = f^{-k}(X) = \bigcup_{n = k+1}^{N+k} \ f^{-n}(U)$.
Thus, for every $x \in X$, the set $N(x,U)$ meets every interval of length $N$ in $\N$.

(5) $\Leftrightarrow$ (11): $\ol{\O(x)}$ is closed and $+ $ invariant and if $x \in E$ with $E$ $+ $ invariant then
$\O(x) \subset E$.

(11), (12) $\Rightarrow$ (10): Obvious.

(10) $\Rightarrow$ (11) and (12):  If $E$ is nonempty, closed and either $+ $ invariant or weakly $- $ invariant then
by Lemma \ref{lem1.01a} (d) $E$ contains a nonempty, closed invariant subset.

If $(X,f)$ is minimal and $U$ is opene then by (9) there  exists $N \in \N$ such that
 $\bigcup_{n = 1}^{N} \ f^{-n}(U) \ = \ X$. Since $Trans(f) = X$ the system is topologically transitive and so by
Theorem \ref{theo1.03} $f$ is surjective. Apply $f^{N+1}$ to get that  $\bigcup_{n = 1}^{N} \ f^{n}(U) \ = \ X$.
That is, $f$ is very strongly transitive.

$\Box$ \vspace{.5cm}

\begin{theo}\label{theo1.07} If $f$ is a homeomorphism, $(X,f)$ is strongly transitive
if and only if it is very strongly transitive if and only if
it is minimal.
\end{theo}

{\bfseries Proof:} A homeomorphism is open and so strongly
transitive $\Leftrightarrow$ very strongly transitive by Theorem \ref{theo1.05}.
In any case, minimality $\Rightarrow$ very strongly
transitive by Theorem \ref{theo1.06}.  For a homeomorphism we can reverse the argument.

For very strongly transitive $\Rightarrow$ minimality we see that   $\bigcup_{n = 1}^{N} \ f^{n}(U) \ = \ X$, apply $f^{-N-1}$ to get
 $\bigcup_{n = 1}^{N} \ f^{-n}(U) \ = \ X$.

 $\Box$ \vspace{.5cm}

Let $h : X_1 \to X_2$ be a continuous surjection between compact spaces.  $h$ is called \emph{irreducible}
if $A \subset X_1$ with $A$ closed and $h(A) = X_2$ implies $A = X_1$.  We define
$$Inj_h \quad = \quad \{ x \in X_1 : \{ x \} = h^{-1}(h(x)) \}.$$
We call $h$ \emph{almost one-to-one} when $Inj_h$ is dense in $X_1$. An almost one-to-one map is clearly
irreducible and if the spaces are metrizable then the converse holds. Furthermore, if $U \subset X_1$ is
open and $x \in U \cap Inj_h$ then $h(U)$ is a neighborhood of $h(x)$.  For details, see, for example,
\cite{ag} Lemma 1.1.

We recover a result from \cite{k2}

\begin{cor}\label{cor1.08} If $(X,f)$ is minimal then $f : X \to X$ is an almost one-to-one map. In particular,
$(X,f)$ is not fully exact unless it is trivial. \end{cor}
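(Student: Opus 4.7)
The plan is to invoke the characterization recalled in the paragraph preceding the corollary (from \cite{ag}, Lemma 1.1): for compact metric $X$, a continuous surjection is almost one-to-one precisely when it is irreducible, i.e.\ no proper closed subset of the domain surjects onto $X$. Once this is in hand, the first assertion reduces to checking irreducibility of $f$. Given a closed $A \subset X$ with $f(A) = X$, I simply observe that $A \subset X = f(A)$, so $A$ is weakly $-$ invariant; condition (12) of Theorem \ref{theo1.06} then forces $A = X$. This takes care of the first sentence of the corollary.

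For the second sentence I would argue by contradiction: suppose $(X,f)$ is minimal, fully exact, and nontrivial. Pick disjoint opene sets $U, V \subset X$; fully exactness yields some $n \in \N$ with $W := (f^n(U) \cap f^n(V))^{\circ} \not= \emptyset$. If I can show that $f^n$ is almost one-to-one, then $Inj_{f^n}$ is dense in $X$, so I can choose $w \in W \cap Inj_{f^n}$. Such a $w$ lies in both $f^n(U)$ and $f^n(V)$, so there exist $u \in U$ and $v \in V$ with $f^n(u) = f^n(v) = w$. But $w \in Inj_{f^n}$ means $\{u\} = f^{-n}(w) = \{v\}$, forcing $u = v$ and contradicting $U \cap V = \emptyset$.

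The main obstacle is therefore the promotion from ``$f$ almost one-to-one'' to ``$f^n$ almost one-to-one''. The seemingly natural idea of re-running the first-part argument inside the system $(X,f^n)$ is blocked: $(X,f^n)$ need not be minimal, so Theorem \ref{theo1.06} (12) is not available there. My plan instead is to iterate irreducibility of $f$ directly. Given a closed $A \subset X$ with $f^n(A) = X$, the set $B := f^{n-1}(A)$ is closed by compactness and satisfies $f(B) = X$, so irreducibility of $f$ (established in the first part) gives $B = X$, i.e.\ $f^{n-1}(A) = X$; iterating this $n$ times produces $A = X$. Hence $f^n$ is irreducible, and so almost one-to-one by the same characterization from \cite{ag}. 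With the density of $Inj_{f^n}$ secured, the contradiction of the previous paragraph goes through and the corollary follows.
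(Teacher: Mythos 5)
Your overall route is the paper's: prove that $f$ is irreducible by observing that a closed $A$ with $f(A)=X$ is weakly $-$ invariant (your citation of condition (12) of Theorem \ref{theo1.06} is in fact the correct one; the paper's reference to (9) at that point is a typo), pass from irreducibility of $f$ to irreducibility of $f^n$ (the paper invokes ``a composition of irreducible maps is irreducible''; you prove exactly that by peeling off one factor of $f$ at a time, which is fine), and then play density of $Inj_{f^n}$ against full exactness.

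There is, however, one step that fails as written. You choose $w \in W \cap Inj_{f^n}$ and assert that $w \in Inj_{f^n}$ forces $f^{-n}(w)$ to be a singleton. By the paper's definition, $Inj_{f^n} = \{ x : \{x\} = f^{-n}(f^n(x)) \}$: membership of $w$ constrains the fibre over $f^n(w)$, not the fibre over $w$ itself, so from $f^n(u) = f^n(v) = w$ you cannot conclude $u = v$. The repair is one line: since $f^{-n}(W)$ is opene ($f^n$ is a continuous surjection) and $Inj_{f^n}$ is dense, pick $x \in f^{-n}(W) \cap Inj_{f^n}$ and set $w = f^n(x) \in W$; then $f^{-n}(w) = \{x\}$ genuinely is a singleton, and $w \in f^n(U) \cap f^n(V)$ forces $x \in U \cap V$, the desired contradiction. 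The paper sidesteps the issue by staying on the domain side: it sets $U_1 = U \cap f^{-n}(W)$ and $V_1 = V \cap f^{-n}(W)$, notes that these are disjoint opene sets with $f^n(U_1) = f^n(V_1)$, and concludes that the opene set $U_1 \cup V_1$ misses $Inj_{f^n}$, so $f^n$ is not almost one-to-one. Either version works once the definition of $Inj$ is respected; aside from this slip your argument matches the paper's.
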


{\bfseries Proof:} Since the system is minimal, $f$ is surjective.  It suffices to show that $f$ is irreducible.

If $A \subset X$ is closed and $f(A) = X$ then $A \subset f(A)$ and so $A$ is a closed, nonempty, weakly
$- $ invariant subset.  Hence, $A = X$ by (9) of Theorem \ref{theo1.06}.

Assume that $(X,f)$ is fully exact and nontrivial. Let $U,V$ be disjoint opene sets and let $n \in \N$ and
$W$ opene such that $f^n(U) \cap f^n(V) \supset W$. Let $U_1 = U \cap f^{-n}(W)$ and $V_1 = V \cap f^{-n}(W)$.
These are disjoint opene sets with $f^n(U_1) = f^n(V_1)$. It follows that $(U_1 \cup V_1) \cap Inj_{f^n} = \emptyset$.
Thus, $f^n$ is not almost one-to-one and so is not irreducible.  Since the composition of irreducible maps is
irreducible, it follows that $f$ is not irreducible. By the above argument, $(X,f)$ is not minimal.

$\Box$ \vspace{.5cm}

\begin{theo} \label{theo1.09} For a dynamical system  $(X,f)$  the
following are equivalent.
\begin{itemize}
\item[(1)]  The system is weak mixing.

\item[(2)] For triple of  opene sets $U, V, W$ in $X$, there
exists $N \in \N$ such  that $f^{-N}(U) \cap V \neq
\emptyset$ and $f^{-N}(U) \cap W \neq \emptyset$.

\item[(3)] For triple of  opene sets $U, V, W$ in $X$, there
exists $N \in \N$ such  that $f^{N}(U) \cap V \neq
\emptyset$ and $f^N(U) \cap W \neq \emptyset$.

\item[(4)]  For every $N \in \N$ the product system $(X^N, f^{(N)})$ is
topologically transitive.

\item[(5)] For every  opene set $U \subset X$, and $\ep > 0$, there exists $N \in \N$ such that
$f^{-N}(U)$ is $\ep$ dense in $X$.

\item[(6)] For every  opene set $U \subset X$, and $\ep > 0$,
$f^{-N}(U)$ is $\ep$ dense in $X$ for infinitely many $N \in \N$.

\item[(7)] For every  opene set $U \subset X$, and $\ep > 0$, there exists $N \in \N$ such that
$f^{N}(U)$ is $\ep$ dense in $X$.

\item[(8)] For every  opene set $U \subset X$, and $\ep > 0$,
$f^{N}(U)$ is $\ep$ dense in $X$ for infinitely many $N \in \N$.

\end{itemize}

\end{theo}

{\bfseries Proof:}  That (1) $\Leftrightarrow$ (2) and (3) are characterizations of weak mixing given by Petersen, \cite{pk}.
The equivalence (1) $\Leftrightarrow$ (4) is a well-known consequence of the Furstenberg Intersection Lemma. Both are reviewed,
for example, in \cite{aan}.

(5) $\Rightarrow$ (2) and(7) $\Rightarrow$ (3): We choose $\ep > 0$ small enough that both $V$ and $W$ contain $\ep$ balls.

(6) $\Rightarrow$ (5) and (8) $\Rightarrow$ (7): Obvious.

(4) $\Rightarrow$ (6) and (8): Let $V_1,...,V_k$ be a finite cover of $X$ by $\ep/2$ balls. Because the product
system $(X^k,f^k)$ is topologically transitive, there exist infinitely many $N_1, N_2$ such that
$N_1 \in N(U,V_1) \cap ... \cap N(U,V_k)$ and $N_2 \in N(V_1,U) \cap ... \cap N(V_k,U)$. these conditions imply that
 $f^{N_1}(U)$ and
$f^{-N_2}(U)$ are $\ep$ dense.

$\Box$ \vspace{.5cm}

\begin{theo}\label{theo1.09aa} Let $(X,f)$ be a dynamical system.

(a) If $(X,f)$ is exact transitive then it is weak mixing.

(b) The following conditions are equivalent.
\begin{itemize}
\item[(1)]  The system is strongly exact transitive.

\item[(2)] For every pair of open sets $U, V$, $\bigcup_{n \in \N} \ (f \times f)^n(U \times V)$
contains the diagonal $id_X$.

\item[(3)] For every $x \in X$, the negative $f \times f$ orbit $\O^-(x,x)$ is dense in $X \times X$.
\end{itemize}

If $(X,f)$ is strongly exact transitive then it is exact transitive and strongly transitive.
\end{theo}

\begin{proof}(a) Condition (2) of Theorem \ref{theo1.09} clearly holds for an exact transitive system.

(b) All three conditions say that for every $x \in X$ and opene $U, V \subset X$ there exists $n \in \N$ such that
$x \in f^n(U)$ and $x \in f^n(V)$.

$\Box$ \end{proof}

\vspace{.5cm}

\begin{theo}\label{theo1.09ab} (a)  If $(X,f)$ is exact transitive then it is topologically transitive and  exact.

(b) If $(X,f)$ is topologically transitive and fully exact then it is exact transitive.

(c) If $(X,f)$ is strongly exact transitive then it is fully exact.
\end{theo}

\begin{proof} (a) :  Obvious.

(b) : Assume $(X,f)$ is topologically transitive and fully exact.
For an opene pair $U,V$  there exists a transitive point $x$ in
the opene set $(\bigcup_n \ f^n(U) \cap f^n(V))^{\circ}$.  So there exists $n$ so that $x \in f^n(U) \cap f^n(V)$.
The orbit  $O(x)$ is then contained in $\bigcup_{k \geq n} f^k(U) \cap f^k(V)$ and so the latter set is dense.

(c) : This follows from Theorem \ref{theo1.01b} (b).

$\Box$ \end{proof}

\vspace{.5cm}

This shows that exact transitivity is a slight strengthening of the conjunction of exactness and topological transitivity.

\begin{theo}\label{theo1.09a} For a dynamical system $(X,f)$  the
following are equivalent.
\begin{itemize}
\item[(1)]  The system is strongly product transitive.

\item[(2)]  For $\ep > 0$ and every finite subset $F \subset X$, there exists $N \in \N$ such that
$f^{-N}(x)$ is $\ep$ dense in $X$ for all $x \in F$.

\item[(3)]  For $\ep > 0$ and every finite subset $F \subset X$, there exist infinitely many $N \in \N$ such that
$f^{-N}(x)$ is $\ep$ dense in $X$ for all $x \in F$.

\item[(4)] The collection of subsets of $\N$: $\{ N(U,x) : x \in X $ and $U$ opene in $X \}$ has the finite intersection
property (or equivalently it generates a filter of subsets of $\N$).
\end{itemize}

If $(X,f)$ is strongly product transitive then it is strong exact transitive.
\end{theo}

{\bfseries Proof:}  (1) $\Rightarrow$ (3):  Let $\{ U_i : i = 1, ..., K_1 \}$ be a be a finite cover of $X$ by opene $\ep/2$ balls.
Given a finite set $F = \{ x_j : j = 1,...,K_2 \} \subset X$. Let $K = K_1 \cdot K_2$,  and
label the points of $X^K$ by
index pairs $ij$. Define the point $x \in X^K$ by  $x_{ij} = x_j$. Let $U_{ij} = U_i$ and
let $U$ be the opene subset of $X^K$ which is the
product of the $U_{ij}$'s.
Because the product system $(X^K,f^{(K)})$ is strongly transitive, Theorem \ref{theo1.04} (4)
implies that there exist $N$ arbitrarily
large such that for each such $N \ U \cap (f^{(K)})^{-N}(x) \not= \emptyset$ That is, there exists
 $z \in U$ with $(f^{(K)})^N(z) = x$. So for each pair $ij$, $z_{ij} \in U_{ij} = U_i$ and
 $f^N(z_{ij}) = x_{ij} = x_j$.
Thus, $f^{-N}(x_j) \cap U_i \not= \emptyset$ for all $i,j$. Hence, each $f^{-N}(x_j)$ is $\ep$ dense because a set which
meets every element of a cover by $\ep/2$ balls is $\ep$ dense.

(3) $\Rightarrow$ (2): Obvious.

(2) $\Rightarrow$ (4): Given $N(U_1,x_1),...,N(U_k,x_k)$ choose $\ep > 0$ so that each $U_i$ contains an $\ep $ ball and
let $F = \{ x_1,...,x_k \}$.  By (2) there exists $N$ such that $f^{-N}(x_i)$ is $\ep$ dense for all $i$. Thus,
$f^{-N}(x_i)$ meets $U_i$ and so $N \in N(U_i,x_i)$ for all $i$.

(4) $\Rightarrow$ (1): Let $x = (x_1,...,x_k) \in X^k$.   Let $U$ contain $U_1 \times ... \times U_k$. Then
$N(U,x) \supset N(U_1,x_1) \cap ... \cap N(U_k,x_k)$.

If the system is strongly product transitive then by Theorem \ref{theo1.04} (5) applied to $(X \times X, f \times f)$,
the negative orbit $O^-(x,x)$ is dense in $X \times X$. By Theorem \ref{theo1.09aa}(b)(3) it follows that $(X,f)$ is
strong exact transitive.

$\Box$ \vspace{.5cm}

\begin{theo} \label{theo1.10} For a dynamical system $(X,f)$  the
following are equivalent.
\begin{itemize}
\item[(1)]  The system is topologically mixing.

\item[(2)]  For every pair of opene
sets $U, V \subset X$ the set $N(U,V)$ is cofinite.

\item[(3)] For every  opene set $U \subset X$, and $\ep > 0$, there exists $N \in \N$ such that
$f^{-n}(U)$ is $\ep$ dense in $X$ for all $n \geq N$.

\item[(4)] For every  opene set $U \subset X$, and $\ep > 0$, there exists $N \in \N$ such that
$f^{n}(U)$ is $\ep$ dense in $X$  for all $n \geq N$.

\end{itemize}

If $(X,f)$ is topologically mixing then it is weak mixing.

\end{theo}

\begin{proof}  That (1) $\Leftrightarrow$ (2) and (3), (4) $\Rightarrow$ (2) are obvious.

(2) $\Rightarrow$ (3) and (4): Let $V_1,...,V_k$ be a finite cover of $X$ by $\ep/2$ balls. As the intersection of
cofinite sets,
$N(U,V_1) \cap ... \cap N(U,V_k)$ and $ N(V_1,U) \cap ... \cap N(V_k,U)$ are cofinite.

$\Box$
\end{proof}

\vspace{.5cm}

\begin{theo} \label{theo1.10a} For a dynamical system $(X,f)$  the
following are equivalent.
\begin{itemize}
\item[(1)]  The system is locally eventually onto.

\item[(2)]  For all $\ep > 0$, there exists $N \in \N$ such that
$f^{-N}(x)$ is $\ep$ dense in $X$ for every $x \in X$.

\item[(3)]  For all $\ep > 0$, there exists $N \in \N$ such that
$f^{-n}(x)$ is $\ep$ dense in $X$ for every $x \in X$ and every $n \geq N$.
\end{itemize}

If $(X,f)$ is locally eventually onto then it is strongly product transitive and topologically mixing.
\end{theo}

{\bfseries Proof:} Notice that if $f^N(U) = X$ then $f^n(U) = X$ for all $n \geq N$.

(1) $\Rightarrow$ (3): Let $\{U_1,...,U_m \}$ be a cover by $\ep/2$ balls. There exists an $N$ such that
$n \geq N$ implies $f^n(U_i) = X$ for $i = 1, ..., m$. Then $f^{-n}(x)$ meets each $U_i$ for all $x \in X$ and $n \geq N$.
So all such $f^{-n}(x)$ are $\ep$ dense.

(3) $\Rightarrow$ (2): Obvious.

(2) $\Rightarrow$ (1): Given an opene $U$, let $\ep > 0$ be such that $U$ contains an $\ep$ ball. If $f^{-N}(x)$ is
$\ep$ dense for all $x$ then $f^{-N}(x)$ meets $U$ for every $x$.  Thus, $f^N(U) = X$.

$\Box$ \vspace{.5cm}

{\bfseries Remark:} It is clear that if $(X,f)$ is locally eventually onto then $N(U,x)$ is co-finite in $\N$ for all opene
$U \subset X$ and all $x \in X$.  It is not clear that the converse holds even if $f$ is an open map.
\vspace{.5cm}

 The various forms of transitivity that we have discussed can be best
 illustrated in the case of symbolic dynamics. Hence, we consider
 when a subshift satisfies these  transitivity conditions.

For a finite set $\A$ with the discrete topology, let $\A^{\N}$ be the space of all one sided infinite  sequences
provided with the product topology, and let $\s$ be the shift map, defined by $\s(x)_n = x_{n+1}$ for all $n \in \N$.

Regarding $\A$ as an alphabet,  a word,  $v$, is a finite sequence consisting of
letters of $\A$, and we write $|v|$ for the length of the word $v$. If $v$ is a word and $w$ is a word or element of
$\A^{\N}$,  we write $vw \in \V$  for the obvious concatenation.

Let $\Omega$ be a closed, shift invariant subset of $\A^{\N}$. The system $(\Omega,\s)$ is called a
\emph{subshift} of $(\A^{\N},\s)$. $\Omega$ is completely determined by its \lq\lq vocabulary" (language) $\V$,
the collection of all finite words which appear in some $x \in \Omega$.

If $x \in \Omega$ and $n>0$, $x_{[1,n]}$ denotes the word consisting of the first $n$ entries of $x$.

 Notice that as $v$
varies over $\V$, the \emph{cylinder sets}
$$[v]\ = \ \{ x \in \Omega : x_{[1,\ |v|]} = v \}\hspace{2cm}$$
comprise a basis of clopen sets for the topology on $\Omega$.

\begin{theo}\label{theo2.06} Let $(\Omega, \s)$ be a subshift with its associated vocabulary $\V$.

\begin{itemize}
\item[(a)] $(\Omega, \s)$ is topologically transitive if and only if for all $v \in \V$ and  all $w \in \V$, there
exists  $a \in \V$ such that $vaw \in \V$.

\item[(b)] $(\Omega, \s)$ is strongly transitive if and only if for all $v \in \V$ and all $x \in \Omega$, there
exists  $a \in \V$ such that $vax \in \Omega$.

\item[(c)] $(\Omega, \s)$ is very strongly transitive if and only if whenever $v \in \V$, there is a finite collection
$\V_v \subset \V$ such that for  every  $x \in \Omega$, $vax \in \Omega$ for some  $a \in \V _v$.

\item[(d)] $(\Omega, \s)$ is minimal  if and only if whenever $v \in \V$ then $v$ occurs in $x$ for all $x \in \Omega$.

\item[(e)] $(\Omega, \s)$ is weak mixing if and only if whenever $v_1, v_2 \in \V$ with $|v_1| = |v_2|$, for all $w \in \V$ there
exist $a_1, a_2 \in \V$ with $|a_1| = |a_2|$ and $v_1a_1w, v_2a_2w \in \V$.

\item[(f)] $(\Omega, \s)$ is  exact  if and only if whenever $v_1, v_2 \in \V$ with $|v_1| = |v_2|$,
 there
exist $x \in \Omega$ and $a_1, a_2 \in \V$ and with $|a_1| = |a_2|$ and   $v_1a_1x, v_2a_2x \in \Omega$.

\item[(g)] $(\Omega, \s)$ is strongly exact transitive if and only if whenever $v_1, v_2 \in \V$ with $|v_1| = |v_2|$,
for all $x \in \Omega$ there
exist $a_1, a_2 \in \V$ with $|a_1| = |a_2|$ and $v_1a_1x, v_2a_2x \in \Omega$.

\item[(h)] $(\Omega, \s)$ is strong product transitive  if and only if whenever $v_1,..., v_n \in \V$ with all of the
same length, and
$x_1,...,x_n \in \Omega$ there exist $a_1,...,a_n$ all of the same length such that $v_1a_1x_1,...,v_na_nx_n \in \Omega$.

 \item[(i)] $(\Omega, \s)$ is topologically mixing if and only if whenever $v, w \in \V$ there
exists $N \in \N$ such that for all $k \in \N$ there exists $a_k \in \V$ with $|a_k| = N + k$ with $va_kw \in \V$.

\item[(j)] $(\Omega, \s)$ is locally eventually onto if and only if whenever $v \in \V$, there is a finite collection
$\V_v \subset \V$ all of whose elements are of the same length such that if
$x \in \Omega$ there is an $a \in \V _v$ with $vax \in \Omega$.
\end{itemize}
\end{theo}

{\bfseries Proof:} (a) : Given $v, w \in \V$ assume there always exists $a$ such that $vaw \in \V$.
Then there exists $x \in \Omega$ which contains this word beginning at
position $i+1$ so $\s^i(x) \in [v]$ and $\s^{i+n}(x) \in [w]$ with $n = |va|$.
Hence, $n \in N([v],[w])$. Thus,  $(\Omega, \s)$ is topologically transitive.

Conversely, suppose $\Omega$ is topologically transitive and $x$ is a transitive point. Let $v,w \in \V$.
Since $\sigma^n(x)$ is also a transitive point for $n>0$ it follows that $v$ and $w$ appear infinitely often in $x$
so certainly a word of the form $vaw$  appears in $\Omega$.

(b) : The given condition says exactly that for each $v \in \V$, every $x \in \Omega$ occurs in some shift of the
cylinder set $[v]$.  Since the $[v]$'s form a basis, this implies strong transitivity.

Conversely, if the system is strongly transitive then $N([v],x)$ is infinite for every $v \in \V$ and $x \in X$.
If $n \in N([v],x)$ with $n > |v|$ then there is a word $a$ of length $n - |v|$ such that $uax \in \Omega$.

(c) : Given any $N \in \N$ there are only finitely many words of length less than $N$. The given condition says
exactly that for each $v \in \V$, every $x \in \Omega$ occurs in some shift of the
cylinder set $[v]$ with an upper bound on the number of shifts required.  This implies very strong transitivity.

Conversely, if the system is very strongly transitive, then given $v \in \V$ there exists $N \in \N$ such that
$\bigcup_{k=1}^N \ \s^k([u]) = \Omega$.  For every $x \in \Omega$, there exists $y \in \Omega$ such that
$\s^{|v|+1}(y) = x$. Then $y \in  \bigcup_{k=1}^N \ \s^k([u])$ and so $x \in \bigcup_{k=|v| + 1}^{N + |v| + 1} \ \s^k([v])$.
This means there exists $a$ with $|a| \leq N$ such that $vax \in \Omega$.

(d) : If $(\Omega, \s)$ is minimal, $v \in \V$ and $x \in \Omega$ then the $\s$ orbit of $x$ enters $[v]$ and so
$v$ occurs in $x$.

Conversely, if every word $v \in \V$ occurs in every $x \in \Omega$ then it occurs in every $\s^k(x)$ and so
occurs in $x$ infinitely often. This implies that $\o (x)$ meets every $[v]$ and so is dense. Hence, the closed set $\o (x)$ equals
$\Omega$ for every $x$.  Thus, $(\Omega, \s)$ is minimal.

(e) : The given condition implies that for every $v_1, v_2, w \in \V$ with $|v_1| = |v_2|$, the
hitting time set $N([v_1],[w]) \cap N([v],[w_2])$ is
nonempty.  Since $\{ [v_1] \times [v_2] : |v_1| = |v_2| \}$ is a basis for $\Omega \times \Omega$,
this implies that the system is weak mixing by the Petersen criterion, see \cite{pk}.

Conversely, if the system is weak mixing then for every $v_1, v_2, w \in \V$ with $|v_1| = |v_2|$, the
hitting time set $N([v_1],[w]) \cap N([v_2],[w])$ is infinite. If $n > |v_1| = |v_2|$ is in the intersection
then there exist $a_1, a_2 \in \V$ with $|a_1| = |a_2| = n - |v|$ such that $v_1a_1w, v_2a_2 \in \V$.

(f), (g) : Proceed as in (e).  We leave the details to the reader.

(h) : Proceed as in (c).  We leave the details to the reader.

(i) : The condition is equivalent to the demand that  $N([v],[w])$ is co-finite for every $v, w \in \V$.

(j) : If the common length of  the elements of $\V_v$ is $n$ then $\s^{|v| + n}([v]) = \Omega$.

Conversely, if the system is locally eventually onto then for all sufficiently large $N$, $\s^N([v]) = \Omega$. If $N > |v|$ then
for every $x \in \Omega$ there exists $a \in \V$ with $|a| = N - |v|$ such that $vax \in \Omega$. Let $\V_v$
be all words of length $N - |v|$.

$\Box$ \vspace{1cm}

Let $(X,f)$ and $ (Y,g)$ be a  dynamical systems.  If
$\pi: X \to Y$ be a continuous surjection such that $\pi \circ f = g \circ \pi$, then $\pi : (X,f) \to (Y,g)$
is called a \emph{factor map}, $(Y,g)$ is called a \emph{factor} of $(X,f)$ and $(X,f)$ is called an \emph{extension} of $(Y,g)$.

In \cite{ag} a property of a dynamical system is called \emph{residual} when it is inherited by factors, by almost one-to-one
lifts and is preserved by inverse limits.  It is  shown there that topological transitivity, minimality, weak mixing and mixing
are residual properties. We now consider  the remaining properties.

\begin{theo}\label{theo2.01} Let $\pi :(X,f) \to (Y,g)$ be a factor map of dynamical systems.

(a) If $(X,f)$ is strongly transitive, very strongly transitive, exact transitive, strongly exact transitive,
exact, strongly product transitive or locally eventually onto then $(Y,g)$ satisfies the corresponding property.

(b) Assume that  $\pi$ is almost one-to-one.  If $(Y,g)$ is very strongly transitive or locally eventually onto  then $(X,f)$
satisfies the corresponding property.

\end{theo}

\begin{proof} (a) Suppose $(X,f)$ is strongly transitive.
Then for any opene $U$ in $Y$, $\pi^{-1}(U)$ is opene in $X$ and so
$X = \bigcup_{n=1}^{\infty} \ f^n(\pi^{-1}(U))$. It follows that
\begin{equation}
\begin{split}
Y \quad = \quad \pi(X) \quad = \quad \bigcup_{n=1}^{\infty} \ \pi f^n(\pi^{-1}(U)) \quad = \\
\bigcup_{n=1}^{\infty} \ g^n \pi (\pi^{-1}(U)) \quad = \quad \bigcup_{n=1}^{\infty} \ g^n(U).\hspace{.5cm}
\end{split}
\end{equation} Thus $(Y,g)$ is strongly transitive.

If $(X,f)$ is very strongly transitive, then we use the same proof with $\bigcup_{n=1}^{\infty} \ f^n$ replaced by
$\bigcup_{n=1}^N \ f^n$ for sufficiently large $N$ depending on $\pi^{-1}(U)$ and obtain the result with
$\bigcup_{n=1}^{\infty} \ g^n$ replaced by $\bigcup_{n=1}^N \ g^n$.  If $(X,f)$ is locally eventually onto, we replace by
$f^N$  and $g^N$.

The remaining properties are similarly proved using $$\pi[f^n(\pi^{-1}(U)) \cap f^n(\pi^{-1}(V))] \subset g^n(U) \cap g^n(V).$$

(b) Now assume that $\pi$ is almost one-to-one and that $U$ is opene in $X$. Let $A$ be a closed set with
nonempty interior. Let $x \in A^{\circ} \cap Inj_{\pi}$.
Since $\pi$ is open at $x$, there exists $V \subset \pi(A^{\circ})$ an open set containing $\pi(x)$.
If $(Y,g)$ is very strongly transitive then $Y = \bigcup_{n=1}^N \ g^n(V)$ for some $N$.
Hence, $Y = \pi( \bigcup_{n=1}^N \ f^n(A))$.  Because $\bigcup_{n=1}^N \ f^n(A)$ is closed and
$\pi$ is irreducible, $X = \bigcup_{n=1}^N \ f^n(A) \subset \bigcup_{n=1}^N \ f^n(U)$. Thus, $(X,f)$ is
very strongly transitive.

If $(X,f)$ is locally eventually onto we replace $ \bigcup_{n=1}^N \ g^n(V)$ by $g^N(V)$ and obtain $X = f^N(U)$ by the same proof.

$\Box$ \end{proof}

\begin{theo}\label{theo2.02} (a) If $(X \times Y, f \times g)$ is
strongly transitive, very strongly transitive, exact transitive, strongly exact transitive,
exact, strongly product transitive or locally eventually onto
then both $(X,f)$ and $(Y,g)$ satisfy the corresponding property.

(b) Assume $(Y,g)$ is mixing. If $(X,f)$ topologically transitive, weak mixing, or mixing then $(X \times Y, f \times g)$
satisfy the corresponding property.

(c) Assume $(Y,g)$ is locally eventually onto.  If $(X,f)$ strongly transitive, very strongly transitive,
exact, full exact, exact transitive, strongly exact transitive, strongly product transitive or locally eventually onto
then $(X \times Y, f \times g)$
satisfy the corresponding property. \end{theo}

{\bfseries Proof:} (a) Each projection is a factor map.

(b) Let $U_1, V_1 \subset X$ and $ U, V \subset Y$ be opene.  Since $(Y,g)$ is mixing $N(U,V)$ is cofinal.
Hence, $N(U_1,V_1) \cap N(U,V)$ is infinite or cofinal if $N(U_1,V_1)$ is. The product result follows for
transitivity and mixing.  Suppose $(X,f)$ is weak mixing. Then $(X \times X, f \times f)$ is topologically
transitive and $(Y \times Y, g \times g)$ is mixing.  Hence, $((X \times Y) \times (X \times Y), (f \times g) \times (f \times g))$
is topologically transitive. Thus, $(X \times Y, f \times g)$ is weak mixing.

(c) Let $U_1 \subset X$ and $U \subset Y$ be opene.  There exist $N$ so that $n \geq N$ implies $g^n(U) = Y$. If
$L \subset \N$ so that $\bigcup \{ f^n(U_1) : n \in L \} = X$ and there exists $n \in L$ with $n \geq N$ then
$\bigcup \{ (f \times g)^n(U_1 \times U) : n \in L \} = X \times Y$.  Using $L = \N$ we obtain the result for
strong topological transitivity.  Using $L = \{ 1,..., N_1 \}$ with $N_1$ sufficiently large, we obtain the result for
very strong topological transitivity. Using $L = \{ N_1 \}$ with $N_1$ sufficiently large, we obtain the result for locally eventually onto.

We leave the others as (easy) exercises.

 $\Box$ \vspace{.5cm}

{\bfseries Remark:} We note that the products of strongly transitive or very strongly transitive systems need not be so.
If $(X,f)$ is nontrivial and minimal with $f$ a homeomorphism then it is very strongly transitive.
The product $(X \times X, f \times f)$ is not minimal and so by Theorem \ref{theo1.07} it is not
strongly transitive.
\vspace{.5cm}

We do not know whether strong transitivity is preserved by almost one-to-one lifts. On the other hand, most
are not necessarily preserved by inverse limits.

Recall that for a surjective system $(X,f)$ the \emph{natural homeomorphism lift} $(\hat X, \hat f)$
is obtained by taking the inverse limit of the inverse sequence of
systems $\{ p_n : (X_{n+1},f_n) \to (X_n,f_n) : n \in \N \}$ with each $(X_n,f_n)$ a copy of $(X,f)$ and
each $p_n = f$. So $\hat x \in \hat X \ \subset \ X^{\N}$ with $\hat x_n = f(\hat x_{n+1})$ for all $n \in \N$.
The map $\hat f$ is the restriction of the product map $f^{\N}$ on $X^{\N}$ to the closed invariant set $\hat X$.
Thus, $\hat f$ is a homeomorphism with inverse the restriction of the shift map to $\hat X$. Each projection
$\pi_n : (\hat X, \hat f) \to (X, f)$ is a factor map.

Now let $(X,f)$ be a nontrivial locally eventually onto system. So $(X,f)$ is full exact and so by
Corollary \ref{cor1.08} it is not minimal.
The inverse limit system $(\hat X, \hat f)$ is not exact since $\hat f$ is a nontrivial homeomorphism.
The system $(\hat X, \hat f)$ is not minimal since its factor $(X,f)$ is not. So by Theorem \ref{theo1.07}
it is not strongly transitive.

An $f: X \to X$ is called \emph{almost open} if for every opene
$U \subset X$, the interior $(f(U))^{\circ}$ is non-empty. An almost one-to-one map is almost open and so
Corollary \ref{cor1.08} implies that minimal maps are always almost open.

\begin{ex}    A locally eventually onto map need not be almost open.\end{ex}Let $g$ be any continuous, surjective map on a compact perfect
metric space $Z$. Let $F = g^{\N}$ on $X = Z^{\N}$. That is,
$F(x)_i = g(x_i)$ for all $i \in \N$.  Notice that $F$ is a continuous surjection on $X$ and $F$ commutes with the
shift map $\sigma$ on $X$, since $(\sigma \circ F)(x)_i = g(x_{i+1}) = (F \circ \sigma)(x)_i$. Let
$f = \sigma \circ F = F \circ \sigma$. Then $(X,f)$ is locally eventually onto, as for any cylinder
$[z] \subset X$, with $|z| = k$, we have $f^k([z]) = X$.

Now choose $g$ so that it is not almost open, e.g. let $Z = [0,1]$ and let $g(t) = 2t$ for $t \in [0,\frac{1}{2}]$ and
$g(t) = 1$ for $t \in [\frac{1}{2},1]$.

For any such $g$ there exists a closed set $K$ with nonempty interior $U$ such that
$g(K)$ is nowhere dense. In the above example,
$K = [\frac{1}{2},1]$ will work. Consider in $X$ the set $\tilde K$ of $x$ such
that $x_2 \in K$. Observe that $\{ x : x_2 \in U \}$ is open. $f(\tilde K) = \{ x : x_1 \in g(K) \}$.
Since $g(K)$ is nowhere dense in $Z$, $f(\tilde K)$ is nowhere dense in $X$.

 $\Box$ \vspace{.5cm}

Let $(X,f)$ be any system, then $f$ is called \emph{iteratively almost open} if for every opene $U \subset X$
$f^n(U)^{\circ} \neq \emptyset$ for infinitely many $n \in \mathbb{N}$.

\begin{prop}\label{prop2.03} If $(X,f)$ is strongly transitive, then $f$ is iteratively almost open. \end{prop}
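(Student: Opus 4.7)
The plan is to reduce the statement to a Baire category argument applied to a cover of $X$ by closed sets, and then to upgrade ``at least one good index'' to ``infinitely many good indices'' by exploiting the surjectivity of $f$.

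Given an opene $U \subset X$, I would first choose an opene $V$ with $K := \overline{V} \subset U$; this is possible in the compact metric space $X$ (e.g.\ take $V$ a sufficiently small open ball centred at a point of $U$). Since $V$ is opene, strong transitivity yields $X = \bigcup_{n=1}^{\infty} f^n(V) \subset \bigcup_{n=1}^{\infty} f^n(K)$, and each $f^n(K)$ is closed as the continuous image of a compact set. Because $K \subset U$, one has $f^n(K)^{\circ} \subset f^n(U)^{\circ}$ for every $n$, so it suffices to show that
\[
M \;:=\; \{\, n \in \N \,:\, f^n(K)^{\circ} \neq \emptyset\,\}
\]
is infinite.

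Suppose toward a contradiction that $M = \{n_1 < \cdots < n_r\}$ is finite. For $n \notin M$ the closed set $f^n(K)$ has empty interior, hence is nowhere dense in $X$. Set $G = X \setminus \bigcup_{i=1}^{r} f^{n_i}(K)$, an open subset of $X$. If $G$ were nonempty, then $G$, being an open subspace of a compact metric space, would itself be a Baire space; but $G = \bigcup_{n \notin M} (G \cap f^n(K))$ would write $G$ as a countable union of relatively closed nowhere-dense subsets, contradicting Baire. Hence $G = \emptyset$ and
\[
X \;=\; \bigcup_{i=1}^{r} f^{n_i}(K).
\]

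Now strong transitivity implies surjectivity of $f$ (Theorem \ref{theo1.03}), so for every $k \geq 0$ we obtain $X = f^k(X) = \bigcup_{i=1}^{r} f^{n_i + k}(K)$, a finite cover of the Baire space $X$ by closed sets. Baire then forces some $f^{n_i + k}(K)$ to have nonempty interior, i.e.\ $n_i + k \in M$ for some $i$. But choosing $k = n_r - n_1 + 1$ gives $n_i + k \geq n_1 + k > n_r$ for every $i$, so $n_i + k \notin M$, a contradiction. Hence $M$ is infinite, and consequently $f^n(U)^{\circ} \neq \emptyset$ for infinitely many $n$. The main obstacle is precisely this last step: a single application of Baire only produces one ``good'' $n$, and it is the shift-and-reapply manoeuvre using surjectivity that promotes this to the required infinitude.
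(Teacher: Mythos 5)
Your proof is correct, but it gets to ``infinitely many'' by a different mechanism than the paper. The paper's proof is a bootstrap: it applies strong transitivity to a closed set $A\subset U$ with nonempty interior, uses Baire once on the countable closed cover $X=\bigcup_n f^n(A)$ to find one $n$ with $f^n(A)^{\circ}\neq\emptyset$, extracts a closed set $B\subset f^n(A)$ with nonempty interior, and then \emph{re-applies strong transitivity to $B$} to find $k$ with $f^k(B)\subset f^{n+k}(A)$ having nonempty interior; iterating produces an increasing sequence of good indices. You instead argue by contradiction: assuming the set $M$ of good indices is finite, you show via Baire on the open complement $G$ that the finitely many sets $f^{n_i}(K)$ must already cover $X$, then push this finite closed cover forward by $f^k$ (using surjectivity, which strong transitivity gives for free) to manufacture a good index beyond $\max M$. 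Your route uses strong transitivity only once, substituting surjectivity plus a finite-cover Baire argument for the paper's repeated invocation of strong transitivity; the paper's route is shorter and more direct, while yours isolates exactly where the infinitude comes from (the shift-invariance of the covering property under $f^k$). Both arguments are sound; the small points one should check in yours --- that an open subset of a compact metric space is Baire, and that the case $M=\emptyset$ is absorbed by the $G\neq\emptyset$ contradiction --- all go through.
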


\begin{proof} Given opene $U$ choose a closed $A \subset U$ with a nonempty interior. Since
 $X = \bigcup_{n=1}^\infty f^n(A)$, the Baire Category Theorem implies that
 for some $n \in \N$, $f^{n}(A)$ contains a closed set $B$ with a nonempty interior.
Similarly, for some $k \in \N$
$f^k(B) \subset f^{i+k}(A)$ contains a closed set with a nonempty interior. Hence, $f^{n}(U)$ has non
empty interior for infinitely many $n \in \N$.

$\Box$ \end{proof}\vspace{.5cm}

 Let $\mathcal M$ be the collection  of closed non-empty negatively invariant subsets
 of $X$ (note that $X \in \mathcal M$). A simple Zorn's Lemma argument shows that any $M \in \mathcal M$
 contains a minimal element of $\M$. We  call the minimal elements of $\M$ \emph{negatively minimal sets}.
 From Lemma \ref{lem1.01a} (d) it follows
 that a negatively minimal set is invariant. Clearly distinct negatively minimal sets are disjoint.

If $x \in X$ we let $M(x)$ denote the intersection of all closed $- $ invariant sets which contain $x$, while it is
a minimum element among these sets it is
not necessarily  negatively minimal invariant as in the previous paragraph.
The sets $M(x)$ need not be disjoint. If $f$ is an open map then $M(x)$ is the closure of $O^-(x)$ by Lemma
\ref{lem1.01a}(e).
If $x' \in M(x)$ then $M(x') \subset M(x)$.

We observe that if $M$ is a negatively minimal set, and $x \in M$, then $M(x)=M$.
Moreover, if whenever $x' \in M(x)$ we have $M(x')=M(x)$ then $M(x)$ is negatively minimal set.

Of course if $(X,f)$ is strongly transitive, then $X$ is negatively minimal. If the map $f$ is open, then the converse holds
by Theorem \ref{theo1.05}.

\medskip

Recall that point $x$ is called \emph{recurrent} when $x \in \o (x)$, i. e. for every open set $U$ containing $x$ there
exist $j \in \N$ such that $f^j(x) \in U$.

\begin{theo}\label{theo2.04} If ($X,f)$ is strongly transitive but not minimal,
then the set of non-recurrent points is dense in $X$.\end{theo}

\begin{proof} Let $M$ be a minimal subset of $X$ and $y \in M$. Since $(X,f)$ is not minimal
$U = X \setminus M$ is opene. Since $O^-(y)$ is dense, it meets $U$.  Let $n$ be the smallest
positive integer such that $f^{-n}(y) \cap U \not= \emptyset$ and let $x$ be a point of this intersection.
Thus, $x \not\in M$, but $f(x) \in f^{-(n-1)}(y) \subset M$. It follows that $\o (x) = \o (f(x)) \subset M$ and so
$x$ is not recurrent. Since $M$ is $+ $ invariant, $O^-(x)$ is disjoint from $M$. On the other hand, for
every point $z \in O^-(x)$, $\o (z) = \o (x) \subset M$ and so the points of the dense set $O^-(x)$ are
non-recurrent.

$\Box$ \end{proof}\vspace{.5cm}

\begin{cor} A strongly transitive $(X,f)$  with all points recurrent is minimal. \end{cor}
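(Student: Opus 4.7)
The corollary is a direct contrapositive of Theorem \ref{theo2.04}. The plan is to assume $(X,f)$ is strongly transitive and not minimal, and then invoke Theorem \ref{theo2.04} to produce a non-recurrent point, contradicting the hypothesis that every point is recurrent.

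More concretely, suppose for contradiction that $(X,f)$ is strongly transitive, that every point of $X$ is recurrent, and yet $(X,f)$ is not minimal. By Theorem \ref{theo2.04}, the set of non-recurrent points is dense in $X$. In particular (since $X$ is nonempty) this set is nonempty, which directly contradicts the assumption that every point of $X$ lies in its own $\o$-limit set. Hence $(X,f)$ must be minimal.

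There is really no obstacle here: the entire content lives in Theorem \ref{theo2.04}, and the corollary is a one-line logical consequence. The only thing worth double-checking is that the conclusion of Theorem \ref{theo2.04} is indeed strong enough — and it is, since ``dense'' already implies ``nonempty'' for a nonempty space $X$, so one does not even need the full density statement, only the existence of a single non-recurrent point produced in the proof of Theorem \ref{theo2.04} (namely a preimage $x \in f^{-n}(y) \setminus M$ for $y$ in a proper minimal subset $M$).
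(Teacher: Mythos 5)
Your proof is correct and is exactly the argument the paper intends: the corollary is the contrapositive of Theorem \ref{theo2.04}, whose conclusion (a dense, hence nonempty, set of non-recurrent points) contradicts the hypothesis that every point is recurrent. The paper itself leaves this one-line deduction to the reader, so your write-up matches its approach.
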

$\Box$ \vspace{.5cm}

We say that $(X,f)$ has \emph{dense periodic sets} if for every opene $U$ there exists $N \in \N$ and
$A \subset U$ closed such that $f^N(A) = A$.

\begin{theo}\label{theo2.05} If $(X,f)$ is locally eventually onto then it has dense periodic sets. \end{theo}

{\bfseries Proof:}  Given $U$ opene, let $V \subset U$ be closed with a nonempty interior.  Since the system is locally eventually onto, there exists
$N \in \N$ such that $f^N(V) = X \supset V$. This says that for the system $(X,f^N)$ the set $V$ is weakly $- $ invariant.
By Lemma \ref{lem1.01a} (d) $V$ contains a nonempty closed set $A$ which is invariant for $f^N$.

$\Box$ \vspace{.5cm}

On the other hand, the following example was given to us by Elon Lindenstrauss.

\begin{ex} A locally eventually onto map need not have any periodic points. \end{ex}
Let $(\widehat X,\s)$ be an infinite minimal subshift of the full shift map $(\{ 1,2 \}^{\N},\s)$. Let $(X_0,\s)$ be the
subshift of the full shift map $(\{ 0, 1, 2 \}, \s)$ consisting of all sequences in which the word $00$ does not occur.
Thus, $(X_0,\s)$ is a subshift of finite type and the map $x \mapsto \widehat x$ which excludes all the occurrences of $0$
defines a continuous map $\pi$ from $X_0$ onto $\{ 1,2 \}^{\N}$. Similarly, for any finite word $u$ in the language of $X_0$
we define $\widehat u$ to be the finite word with alphabet $\{ 1,2 \}$ obtained by excluding the $0$'s.
Let $X = \pi^{-1}(\widehat X)$.  While $\pi$ is not an action map, it
is nonetheless clear that $X$ is a $+$ and $-$ invariant subset of $X_0$. For any word $\widehat u$ in the language of $\widehat X$
we obtain words $u$ in the language of $X$ by inserting $0$'s arbitrarily but with no repeats.

Since $\widehat X$ contains no periodic points, is obvious that
that $X$ does not. We now show that $(X,\s)$ is leo. It suffices to show that for any word $u$ in the alphabet
of $X$, there exists $N \in \N$ such that $\s^N([u]) = X$. We may assume that $u$ has length $L$ at least two so that
$\widehat u$ has length $\widehat L > 0$.  Because $(\widehat X,\s)$ is minimal, there exists $M \in \N$ such that
$$\bigcup_{0 \leq k < M} \ \s^{-k}([\widehat u]) = \widehat X.$$

Let $z \in X$ with $\pi(z) = \widehat z \in \widehat X$. Since the shift is surjective on $\widehat X$ we can choose $\widehat w \in \widehat X$
such that $\s^{2M + \widehat L}(\widehat w) = \widehat z$. There exists $k$ with $0 \leq k < M$ such that $\widehat x = \s^{k}(\widehat w) \in [\widehat u]$.
Thus, $\widehat x$ is the concatenation $\widehat u \widehat v \widehat z$ where $\widehat v$ has length between $M+1$ and $2M$. By inserting at most $M$ $0$'s
without repeats between the endpoints, we can define a word $v$ in the language of $X$ with length exactly $2M$. Thus, $uvz \in [u] \subset X$ with
$\s^{L + 2M}(uvz) = z$.  Since $z$ was arbitrary, $\s^{N}([u]) = X$ with $N = L + 2M$.

$\Box$ \vspace{1cm}

 We  have  established

\vskip .3cm

\centerline{\scriptsize{Locally Eventually Onto $\begin{array}{l}
                                \Longrightarrow \text{Mixing} \\
                                \Longrightarrow \text{Strongly Product Transitive}\\
                                    \Longrightarrow \text{Exact Transitive}
                              \end{array}$
  $\Longrightarrow$ Weak Mixing $\Longrightarrow$ Transitive }}

  \vskip .5cm

 \centerline{\scriptsize{    $\begin{array}{ccc}
                                                                              \text{  } & \text{  } & \text{Strongly Product Transitive  }\\
                                                                              \text{  } &  \text{  } & \Downarrow \\
                                                                               \text{  Exact Transitive } & \Longleftarrow  & \text{ Strongly Exact Transitive } \\
                                                                                \Downarrow & \text{  }  & \Downarrow \\
                                                                             \text{ Transitive} & \Longleftarrow  & \text{ Strongly Transitive } \\
                                                                              \text{  } & \text{  } & \text{  }\\
                                                                               \text{  } & \text{  } & \text{  }\\
                                                                                \text{  } & \text{  } & \text{  }\\
                                                                               \text{  } & \text{  } &\text{  }
                                                                             \end{array}
  \Longleftarrow \text{Very Strongly Transitive} \begin{array}{l}
                                                                                                                     \Longleftarrow  \text{Minimal} \\
                                                                                                 \Longleftarrow  \text{Locally Eventually Onto} \end{array}$   }}

The reverse implications do  not hold for most of these.

\begin{prop} The following statements hold:

1. Mixing $\nRightarrow$ Strongly Transitive, Exact Transitive or Minimal.

2. Very Strongly Transitive $\nRightarrow$ Minimal.

3. Minimal $\nRightarrow$ Exact Transitive or Weak Mixing.

4. Strongly Product Transitive \& Mixing $\nRightarrow$ Very Strongly  Transitive.

5. Exact Transitive \& Mixing $\nRightarrow$  Strongly  Transitive.

6. Weak Mixing $\nRightarrow$ Mixing.

\end{prop}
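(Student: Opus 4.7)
I furnish a counterexample for each of the six non-implications. Items (1)--(3) and (6) are handled by classical systems, while items (4) and (5) require more careful constructions, whose symbolic versions can be checked via the combinatorial criteria of Theorem \ref{theo2.06}.

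For (1), take the two-sided full shift on $\{0,1\}^{\Z}$: it is topologically mixing, and being a homeomorphism, for any disjoint opene $U, V$ one has $f^n(U) \cap f^n(V) = f^n(U \cap V) = \emptyset$ for every $n$, ruling out exact transitivity. The distinct fixed points $0^{\Z}$ and $1^{\Z}$ show it is not minimal, so by Theorem \ref{theo1.07} it is not strongly transitive. For (2), the one-sided full shift on $\{0,1\}^{\N}$ is locally eventually onto (hence very strongly transitive) but fails minimality because $\{0^{\N}\}$ is proper, closed, and invariant. For (3), a minimal irrational rotation $R_\alpha$ on $S^1$ is a homeomorphism, so the argument of (1) blocks exact transitivity; its equicontinuity means that every translate of the diagonal in $S^1 \times S^1$ is invariant under $R_\alpha \times R_\alpha$, so the product is not transitive and weak mixing fails. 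For (6), the Chacon substitution subshift generated by $0 \mapsto 0010,\ 1 \mapsto 1$ is the classical minimal example that is weakly mixing but not topologically mixing.

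For (4) and (5), the strategy is to exhibit a topologically mixing system which is strongly product transitive (respectively exact transitive) yet not very strongly transitive (respectively not strongly transitive). When the example is a subshift, Theorem \ref{theo2.06} reduces both the positive and negative requirements to combinatorial statements about its language $\V$: the positive condition asserts that equal-length connector words exist between arbitrary finite lists of prefixes and targets (part (h) for SPT, or enough of (g)/(f) together with Theorem \ref{theo1.09ab}(b) for ET), while the negation of VST (part (c)) or of ST (part (b)) asserts that some single target cannot be reached uniformly, respectively at all, from some fixed cylinder.

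The main obstacle is (4) and (5); the other four items are classical or follow immediately from theorems already proved once the example is named. The difficulty in (4)--(5) lies in engineering a language that is rich enough to support the strong multi-point covering demanded by SPT or ET yet contains a localised obstruction preventing the uniform single-point covering of VST or the everywhere-reachable single-point covering of ST. Theorem \ref{theo2.06} supplies the precise combinatorial vocabulary in which this balancing act is carried out and verified.
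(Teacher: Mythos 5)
Your treatment of items 1, 2, 3 and 6 is essentially the paper's: the two-sided full shift (mixing homeomorphism, hence not exact, not minimal, and not ST by Theorem \ref{theo1.07}), the one-sided full shift (LEO hence VST, not minimal), and the irrational rotation (minimal, not exact, product decomposes into invariant circles so not WM) are exactly the examples used there. For item 6 the paper merely cites the literature; your nomination of the Chacon subshift is reasonable, but you should verify that it is \emph{topologically} weakly mixing and \emph{not topologically} mixing -- the classical Chacon facts are measure-theoretic, and measure-theoretic non-mixing does not by itself rule out topological mixing. The safer move is to cite the shift examples in the Akin--Glasner reference, as the paper does.

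The genuine gap is in items 4 and 5: you have not produced counterexamples, only a specification of what a counterexample would have to satisfy. Saying that Theorem \ref{theo2.06} ``supplies the precise combinatorial vocabulary in which this balancing act is carried out and verified'' is a restatement of the problem, not a solution; the entire mathematical content of these two items is the construction and verification, and that is what is missing. For item 4 the paper takes the orbit closure of the sequences $0^k 1^{t_1} 0^{3^{n_1}} 1^{t_2} 0^{3^{n_2}} \cdots$: the freedom in the exponents $t_i, n_i$ lets one connect any word to any point by a connector of every sufficiently large length (depending on the target point), which gives SPT and mixing, while the unbounded gaps $3^{n_i}$ force the minimal connector length to be unbounded as the target varies, killing the uniformity required by VST. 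For item 5 the paper does not use a subshift at all: it builds an interval map on $[0,1]$, linear on a bi-infinite family of intervals accumulating at the fixed endpoints $0$ and $1$, with expanding slopes. An expansion argument (Lemma \ref{AnimaExample}) shows every nontrivial closed interval in $(0,1)$ eventually covers every compact subinterval of $(0,1)$ for all large times, giving ET and mixing, while $\O^-(0)=\{0\}$ and $\O^-(1)=\{1\}$ are not dense, so ST fails. Until you exhibit and check concrete systems of this kind (or others), items 4 and 5 remain unproved.
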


{\bfseries Proof:}  We prove each of these statements by providing examples.

\vskip .3cm

1. \& 2.  The full shift $\sigma$ on $\Omega = \{ 0,1 \}^{\Z}$ is a mixing homeomorphism which is not minimal. Since it
is injective, it is not exact.  By Theorem \ref{theo1.07} it is not ST because it is not minimal.  The full shift on
$\{0, 1 \}^{\N}$ is LEO and so is VST but not minimal.

\vskip .3cm

3. Let $R_\alpha$ be an irrational rotation on the unit circle $\mathbb{S}^1$.
The map $R_\alpha$ is a minimal, isometric homeomorphism.
Hence, it is VST but is not exact or weak mixing.

\vskip .3cm

4. Let $(\Omega,\sigma)$ be subshift on the closure of all sequences of the form
$$0^k{1^{t_1}}0^{3^{n_1}}{1^{t_2}}0^{3^{n_2}}{1^{t_3}}0^{3^{n_3}}
\ldots {1^{t_i}}0^{3^{n_i}} \ldots, \ k \geq 0, t_i, n_i \in \N$$
in  $\{0,1\}^{\N}$.

This is SPT and Mixing but not VST. To see this, one shows that if $v$ is a word in the language of $\Omega$ and $x \in \Omega$
there exists $N$, depending on $v$ and $x$, such that for every $n \geq N$ there exists a word $a$ of length $n$ such that
$vax \in \Omega$. This implies SPT and Mixing. On the other hand, as $x$ varies, the minimum length $N$ is not bounded
and this implies that the system is not VST.

 \vskip .3cm

5. Let $\S = \{ a_i, b_i : i \in \Z \}$ consist of two bi-infinite sequences
in $(0,1)$, with $b_{i - 1} < a_i < b_i < a_{i+1}$ for all $i \in \Z$ with $a_i, b_i \to 0, 1$ as $i \to \mp\infty$. 
Let $J^+_i = [a_i,b_i], \ J^-_i = [b_i,a_{i+1}]$ for $i \in \Z$. with $J^{\pm, \circ}_i$ the corresponding open intervals.

Define: $f$ on $[0,1]$ so that
$$0 \mapsto 0, \ a_i \mapsto a_{i-1}, \ b_i \mapsto b_{i+1}, \ 1 \mapsto 1$$
and $f$ is linear on each $J^{\pm}_i$. The points $0$ and $1$ are fixed points with singleton pre-images.

Thus, $f$ is monotone increasing on each $J^+_i$, decreasing on each $J^-_i$ and the slope on each interval
has absolute value greater than one. Observe that $f$ is monotone on a closed interval $J$ iff $J^{\circ}$ is disjoint from
$\S$.

Clearly,
$$f(J^{-}_{i}) \ \supset \ J^+_i$$
and
$$f([a_{i-k},b_{i+k}]) \ \supset \ [a_{i-(k+1)},b_{i+(k+1)}] \ \supset \ [a_{i-k},b_{i+k}]$$
for all $i \in \Z$ and $k = 0,1,2, \dots$.

Thus, for each $i \in \Z$ the sequence of intervals  $\{ f^k(J^+_i) : k = 0, 1, \dots \}$ is increasing with union $(0,1)$.

\begin{lem}\label{AnimaExample} If $J,K$ are closed, nontrivial intervals
in $(0,1)$ then there exists $N \in \N$ so that $f^k(J) \supset K$ for
all $k \geq N$. \end{lem}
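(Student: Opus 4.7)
The plan is to reduce the lemma to showing that some iterate $f^N(J)$ contains an entire monotone piece $J^+_i$. Once that is achieved, an easy induction using the given estimate $f([a_{i-k},b_{i+k}])\supset [a_{i-(k+1)},b_{i+(k+1)}]$ gives $f^{N+k}(J)\supset f^k(J^+_i)\supset [a_{i-k},b_{i+k}]$ for every $k\geq 0$. Since $a_{i-k}\to 0$ and $b_{i+k}\to 1$ and $K\subset(0,1)$ is compact, some $k_0$ has $K\subset[a_{i-k_0},b_{i+k_0}]$; the nested monotonicity of the family $[a_{i-k},b_{i+k}]$ yields $K\subset f^{N+k}(J)$ for every $k\geq k_0$, so $N'=N+k_0$ works.

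The first step toward producing such an $N$ is a length-expansion argument. If $f^n(J)$ lies inside a single monotone piece $J^{\pm}_{j_n}$ for every $n$, then $|f^{n+1}(J)|=\lambda_{j_n}|f^n(J)|$ where $\lambda_{j_n}>1$ is the slope on $J^{\pm}_{j_n}$; the constraint $|J^{\pm}_{j_n}|\geq |f^n(J)|\geq |J|$ forces $j_n$ to lie in a finite set, since the widths $|J^{\pm}_j|$ tend to $0$ as $|j|\to\infty$, so the slopes involved share a common lower bound $\lambda_0>1$, and $|f^n(J)|\geq\lambda_0^n|J|$ eventually exceeds $1$, which is absurd. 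Hence some first $n_0$ has $f^{n_0}(J)$ meeting $\mathcal{S}$ in its interior; if $f^{n_0}(J)$ already contains two points of $\mathcal{S}$, then the monotone piece between two consecutive such points is contained in $f^{n_0}(J)$ and we are done.

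In the remaining case $f^{n_0}(J)$ straddles exactly one interior point $s\in\mathcal{S}$; since $s$ is a local extremum of $f$, the point $f(s)\in\mathcal{S}$ is an endpoint of $f^{n_0+1}(J)$. Either $f^{n_0+1}(J)$ already contains a full monotone piece (done) or it sits inside a single monotone piece, ``attached'' at the endpoint $f(s)$. Taking $s=b_i$ for concreteness, $f^{n_0+1}(J)=[L_1,b_{i+1}]\subset J^+_{i+1}$, and since $f$ is linear-increasing on each $J^+_j$ with $b_j\mapsto b_{j+1}$, an induction shows that as long as the attachment persists we have $f^{n_0+k}(J)=[L_k,b_{i+k}]\subset J^+_{i+k}$ and $|f^{n_0+k+1}(J)|=\lambda_{i+k}\,|f^{n_0+k}(J)|$. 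These lengths are strictly increasing, hence stay $\geq |f^{n_0+1}(J)|>0$, while the containing piece has width $|J^+_{i+k}|\to 0$ as $k\to\infty$ (because $a_{i+k},b_{i+k}\to 1$). Thus the attachment must fail at some finite step, at which point $L_k\leq a_{i+k}$ and so $J^+_{i+k}\subset f^{n_0+k}(J)$. The case $s=a_i$ is symmetric, with attachment shifting toward $0$ through the pieces $J^+_{i-k}$ which likewise shrink. The main technical obstacle is the attached-case bookkeeping: verifying inductively that iterates really retain the form $[L_k,b_{i+k}]$ (resp.\ $[a_{i-k},R_k]$) and that the comparison of growing lengths against shrinking containing pieces forces breakdown. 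Both follow from the explicit description of $f$ on each $J^{\pm}_j$ together with the asymptotics of $a_i,b_i$ at the fixed points $0$ and $1$.
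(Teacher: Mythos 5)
Your argument is correct, and it follows the paper's skeleton for the first two stages: the reduction to ``some iterate of $J$ contains a full monotone piece, hence eventually the growing intervals $[a_{i-k},b_{i+k}]$,'' and the geometric length-expansion argument showing an iterate cannot stay inside single monotone pieces forever (only finitely many pieces have width at least $|J|$, so the relevant slopes are uniformly bounded above $1$). Where you genuinely diverge is the last and most delicate step, upgrading ``some iterate meets $\mathcal{S}$'' to ``some iterate contains two points of $\mathcal{S}$.'' The paper does this abstractly: it takes a closed subinterval $[c,d]$ with one endpoint in $\mathcal{S}$, lets $k$ be minimal with $f^k((c,d))\cap\mathcal{S}\neq\emptyset$, notes that $f^k$ is then injective on $[c,d]$, and concludes that the interior point of $\mathcal{S}$ so produced is distinct from the endpoint image $f^k(c)$ or $f^k(d)$, which also lies in $\mathcal{S}$ since $f(\mathcal{S})\subset\mathcal{S}$. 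You instead track the anchored interval explicitly: once an iterate has an endpoint at $f(s)=b_{i+1}$ (resp.\ $a_{i-1}$), the anchor marches monotonically toward the fixed point $1$ (resp.\ $0$), the ambient pieces $J^+_{i+k}$ (resp.\ $J^+_{i-k}$) shrink to zero width there, while the interval lengths grow, so the attachment must break and a full piece is engulfed. Both are valid; the paper's version is shorter and uses only forward invariance of $\mathcal{S}$ plus a second application of the expansion claim, while yours is more computational but self-contained at that step and makes the mechanism (escape toward the fixed points) visible. Your case analysis at each stage (two points of $\mathcal{S}$ versus exactly one, necessarily at the anchored endpoint inside $J^+_{i\pm k}$) is exhaustive, so I see no gap.
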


{\bfseries Proof:} If some iterate of $J$ contains some $b_i$ and $a_j$ then it contains either $J^+_i$ or $J^-_i$ and
the result then follows using the increasing sequence $\{ f^k(J^+_i)\}$. If it contains either two $b_i$'s or two $a_j$'s
then it contains some $b_i$ and some $a_j$. Thus, it suffices to show that some iterate of $J$ meets $\S$ in at least two points.

First we show that if $G$ is a nonempty open interval in $(0,1)$ then some iterate of $G$ meets $\S$. Assume not.
Then, inductively, each $f^k(G)$ is an  interval contained in  some $J^{\pm, \circ}_i$ and so it is an open interval
on which $f$ is monotone. As the slopes have absolute value greater than one, the lengths are increasing.  On the
other hand, for any $\epsilon > 0$ there are only finitely many $J^{\pm}_i$ with length greater than or equal to $\epsilon$.
So with $\epsilon$ equal to the length of $G$,  each $f^k(G)$ is contained in one of this finite collection. On a finite
collection of $J^{\pm}_i$'s the absolute value of the slopes is bounded above one.  Hence, the lengths must be increasing
at least at a geometric rate and this is impossible in $[0,1]$.

Thus, we obtain that some iterate of $G$ contains a nontrivial closed interval $[c,d]$ with either $c$ or $d$ in $\S$.
By the previous argument,  $f^{k}((c,d))$ meets $\S$ for some $k = 0, 1, \dots$. Let $k$ be the smallest such nonnegative
integer. Then on $[c,d]$ the map $f^k$ is injective. Hence, the point $e \in f^k((c,d)) \cap \S$ is not equal to
either $f^k(c)$ or $f^k(d)$ one of which also lies in $\S$.

Thus, as required we have shown that some iterate of a nontrivial interval meets $\S$ in at least two places.

It follows that $f$ is exact transitive and topologically mixing. Since, $\O^-(0) = \{ 0 \}$ and $\O^-(1) = \{ 1 \}$,
the system is not ST.

 \vskip .3cm

6. It is well-known that there are weak mixing homeomorphisms which are not mixing.  For shift examples see  \cite{ag}.

$\Box$

We are left with the following questions:
\begin{itemize}
\item Does Strongly Exact Transitive $\Rightarrow$ Strongly Product Transitive?

\item Does Strongly Product Transitive $\Rightarrow$ Mixing?

\item Does Exact Transitive $\Rightarrow$ Mixing?
\end{itemize}



Our example in 5. above yields  a
transitive map on $[0,1]$  such that
 the two endpoints have singleton backward orbits; whereas  backward
 orbits of all other points are dense. This is similar to an example given in \cite{b-m}.

 The same example when restricted to the open
interval $(0,1)$ can be used to
 provide a transitive map on ${\R}$ with all points having dense backward orbits.  An example of a
 transitive map on ${\R}$ such that $f^{-1}(0)= \{0\}$ and all other
 backward orbits are dense has been constructed in
 \cite{as}. It has been also proved that there can be at most one such point,
 whose backward orbit is not dense, for any transitive map on $\R$ in  \cite{avs}.

Towards the end, it would also be interesting to look into noninvertible, minimal subshifts.
Observe that given any minimal subshift $\Psi \subset \mathcal{A}^{\Z}$, where $\mathcal{A}$ is a
finite alphabet, we can define a  subshift $\Omega \subset \mathcal{A}^{\N}$, by
taking the projection $\pi: \Psi \to \Omega$ induced by the inclusion of $\N$ into $\Z$.
As a factor of a minimal system, $(\Omega, \sigma)$ is minimal. If this system is infinite,
then it will contain asymptotic pairs of points, see, e.g. \cite{aus} page 19.
%
%
Beginning with such a pair and shifting we can obtain
  $u, v \in \Omega$ such that $u_0 \neq v_0$,
 but $u_n = v_n$ for all $n \in \N$.
Thus, $\sigma$  is not invertible on $\Omega$.

We  refer the reader to \cite{k2} for more on noninvertible minimal maps.

\bibliographystyle{plain}

\begin{thebibliography}{99}




\bibitem{aab}
{\bf Akin, E.  Auslander, J.  and  Berg, K.} When is a transitive
map chaotic?, {\it Convergence in Ergodic Theory and Probability,}
 Walter de Gruyter \& Co. (1996).

 \smallskip



 \bibitem{aan}
{\bf Akin, E.  Auslander, J.  and  Nagar, A.} Dynamics of Induced Systems (to appear {\it Ergod. Theo. \& Dyn. Sys.}).
 \smallskip


\bibitem{ac}
{\bf Akin, E.  and  Carleson, J.} Conceptions of Topological Transitivity,
 {\it Topology and its Applications,}
 159, 2815-2830 (2012).
\smallskip


\bibitem{ag}
{\bf Akin, E.  and  Glasner, E.} Residual Properties and Almost Equicontinuity,
 {\it Jour. d'Anal Math,}
 84, 243-286 (2001).

 \smallskip



 \bibitem{aus}
{\bf Auslander, J.} Minimal Flows and Their Extensions, North Holland (1988).

 \smallskip


\bibitem{bb}
 {\bf Banks, J. Brooks, J. Cairns, G. Davis, G. and  Stacey, P.} On
 Devaney's definition of chaos, {\it Amer. Math. Monthly,} 99,  332-334(1992).

\smallskip

\bibitem{b-m}
{\bf  Block, L. S. and Coven, E.} Topological conjugacy and
transitivity for a class of piecewise monotone maps of the
interval, {\it Tran. Amer. Math. Soc.,} 300, 297-306(1987).

\smallskip

\bibitem{du}
{\bf Denker, M. and Urbanski, M.} Ergodic theory of equilibrium states for rational maps,
{\it Nonlinearity,} 4, 103-134(1991).

\smallskip


\bibitem{gw}
{\bf Glasner, E. and Weiss, B.} Sensitive dependence on initial
conditions, {\it Nonlinearity,} 6, 1067-1075(1993).

\smallskip

\bibitem{kss}
{\bf Kaminski, B., Siemaszko, A. and Szymanski, J.} The determinism and Kolmogorov properties in
topological dynamics, {\it Bull. of the Polish Acad. of Sciences, Mathematics} 51, 401-417(2003)

\smallskip

 \bibitem{k}
{\bf Kolyada, S. and Snoha, L. } Some aspects of topological
transitivity - a survey, {\it Grazer Math. Ber.,} 334, 3-35(1997).

\smallskip


 \bibitem{k2}
{\bf Kolyada, S., Snoha, L. and Trofimchuk,S. } Noninvertible Minimal Maps,
{\it Fundamenta Mathematicae.}  168 , 141-163(2001).

\smallskip

\bibitem{avksm}
{\bf Nagar, A. and V. Kannan} Topological Transitivity for Discrete
Dynamical Systems, {\it Applicable Mathematics In The Golden Age,}
 Narosa Publications(2003), 534-584.


\smallskip

\bibitem{avs}
{\bf Nagar, A., V. Kannan and  Seshasai, S.P.} Properties of
topologically transitive maps on the real line, {\it Real Anal.
Ex.,} 27(1), 1-10 (2001/2002).

\smallskip

\bibitem{as}
{\bf Nagar, A. and Seshasai, S.P.} Some classes of transitive
maps on $\R$, {\it Jour. of Anal.,} 8, 103-111(2000).

\smallskip

\bibitem{parry}
{\bf  Parry, W.} Symbolic dynamics and Transformations of the unit
interval, {\it Tran. Amer. Math. Soc.,} 122, 368-378(1966).

\smallskip


\bibitem{pk}
{\bf Petersen K. E.,} Disjointness and weak mixing of minimal sets, {\it Proc. Amer. Math. Soc.,} 24 (1970) 278-280.

\smallskip

%
%

\bibitem{sil}
{\bf Silverman, S.} On maps with dense orbits and the definition
of chaos, {\it Rocky Mountain Jour. Math.} 22, 353 - 375(1992).
\end{thebibliography}

\end{document}